\newif\ifsocg
\newtheorem{theorem}{Theorem}
\newtheorem{lemma}[theorem]{Lemma}
\newtheorem{corollary}[theorem]{Corollary}
\theoremstyle{remark}
\newtheorem{remark}[theorem]{Remark}
\theoremstyle{definition}
\theoremstyle{theorem}
\newtheorem*{theorem*}{Theorem}
\newtheorem*{lemma*}{Lemma}
\newtheorem*{proposition*}{Proposition}
\newtheorem{proposition}[theorem]{Proposition}
\newtheorem*{fact*}{Fact}
\newtheorem*{question*}{Question}
\newtheorem*{corollary*}{Corollary}
\numberwithin{claimcounter}{theorem}
\newtheorem*{claim*}{Claim}
\theoremstyle{remark}
\newtheorem*{remark*}{Remark}
\theoremstyle{definition}
\newtheorem*{definition*}{Definition}
\numberwithin{equation}{section}
\newtheorem*{observation*}{Observation}
\newcommand{\heading}[1]{\subparagraph{#1}}
\newcommand{\heading}[1]{\paragraph{#1}}
\newcommand{\R}{\mathbb{R}}
\newcommand{\Z}{\mathbb{Z}}
\newcommand*{\medcup}{\mathop{\scalebox{1}{\ensuremath{\bigcup}}}\nolimits}
\DeclareMathOperator{\sd}{sd}
\DeclareMathOperator{\lk}{lk}
\DeclareMathOperator{\conv}{conv}
\newcommand{\wh}[1]{\widehat{#1}}
\DeclareMathOperator{\andd}{and}
\newcommand{\vand}{v_{\andd}}
\newcommand{\fand}{f_{\andd}}
\newcommand{\true}{\textsc{true}}
\newcommand{\false}{\textsc{false}}
\newcommand{\gand}{{\bf A}}
\newcommand{\gadg}{{\bf G}}
\newcommand{\cla}{{\bf C}}
\newcommand{\var}{{\bf V}}
\newcommand{\pth}[1]{\left( #1 \right)}
\newif\ifcmts
\newcommand{\marrow}{\marginpar{\boldmath$\longleftarrow$}}
\newcommand{\martin}[1]{\ifhmode\newline\fi\marrow
  \textsf{\textcolor{green}{\bf
MARTIN:} #1\newline}}
\newcommand{\uli}[1]{\ifhmode\newline\fi\marrow
  \textsf{\textcolor{red}{\bf
ULI:} #1\newline}}
\newcommand{\xavier}[1]{\ifhmode\newline\fi\marrow
  \textsf{\textcolor{blue}{\bf
Xavier:} #1\newline}}
\newcommand{\zuzka}[1]{\ifhmode\newline\fi\marrow
  \textsf{\textcolor{magenta}{\bf
Zuzka:} #1\newline}}
\newcommand{\pavel}[1]{\ifhmode\newline\fi\marrow
  \textsf{\textcolor{teal}{\bf
Pavel:} #1\newline}}
\newcommand{\martin}[1]{}
\newcommand{\uli}[1]{}
\newcommand{\xavier}[1]{}
\newcommand{\zuzka}[1]{}
\newcommand{\pavel}[1]{}
\newif\ifMpic
\title{Shellability is \textup{NP}-complete\footnote{
Partially supported by the Czech-French collaboration project EMBEDS
II (CZ: 7AMB17FR029, FR: 38087RM). XG is partially supported by IUF. 
MT is partially supported by the GA\v{C}R grant 16-01602Y.}}
\author[1]{Xavier Goaoc}
\author[2]{Pavel Pat\'ak}
\author[3]{Zuzana Pat\'akov\'a}
\author[4]{Martin Tancer}
\author[3]{Uli Wagner}
\affil[1]{\small Universit\'e Paris-Est, LIGM (UMR 8049), CNRS, ENPC, ESIEE, UPEM, F-77454, Marne-la-Vall\'ee, France.}
\affil[2]{\small Department of Mathematics and Statistics, Masaryk University, Brno, Czech Republic.}
\affil[3]{IST Austria, Klosterneuburg, Austria.}
\affil[4]{\small Department of Applied Mathematics, Charles University, Prague, Czech Republic.}
\begin{document}

\maketitle

\begin{abstract}
  We prove that for every $d\geq 2$, deciding if a pure, $d$-dimensional, simplicial
  complex is shellable is \textup{NP}-hard, hence \textup{NP}-complete. 
  This resolves a question raised, e.g., by Danaraj and Klee in 1978. Our reduction
  also yields that for every $d \ge 2$ and $k \ge 0$, deciding if a
  pure, $d$-dimensional, simplicial complex is $k$-decomposable is
  \textup{NP}-hard. For $d \ge 3$, both problems remain \textup{NP}-hard when restricted
  to contractible pure $d$-dimensional complexes. 
  \ifsocg
  \else
Another simple corollary of our result is that it is NP-hard to decide whether a given poset is CL-shellable.
  \fi
\end{abstract}

\section{Introduction}
A $d$-dimensional simplicial complex is called \emph{pure} if all its
facets (i.e., inclusion-maximal faces) have the same dimension $d$. 
A pure $d$-dimensional simplicial complex is \emph{shellable} if there exists
a linear ordering $\sigma_1, \sigma_2, \ldots, \sigma_n$ of its facets 
such that,  for every $i \ge 2$, $\sigma_i \cap \pth{\cup_{j < i} \sigma_j}$ is a pure
$(d-1)$-dimensional simplicial complex; such an ordering is called a \emph{shelling} or \emph{shelling order}. 
\ifsocg \else

\fi
For example, the boundary of a simplex is shellable (any order works), 
but no triangulation of the torus is  
(the condition fails for the first triangle $\sigma_i$ that
creates a non-contractible $1$-cycle).

The concept of shellings originated in the theory of convex polytopes 
(in a more general version for polytopal complexes
), 
as an inductive procedure  to construct the boundary of a polytope by adding the facets 
one by one in such a way that all intermediate complexes (except the last one) are contractible.
The fact that this is always possible, i.e., that 
convex polytopes are shellable, was
initially used as an unproven assumption in early papers (see the discussion in \cite[pp.~141--142]{Grunbaum:Convex-polytopes-2003} for a more detailed account of the history), before being proved by Bruggesser and Mani~\cite{bruggesser1972shellable}.

The notion of shellability extends to more general objects (including
non-pure simplicial complexes and posets~\cite{bjorner97}),
and plays an important role in diverse areas including piecewise-linear 
topology\cite{Rourke:Introduction-to-piecewise-linear-topology-1982,Bing:The-geometric-topology-of-3-manifolds-1983}, 
polytope theory (e.g., McMullen's proof of the \emph{Upper Bound Theorem}~\cite{mcmullen1970maximum}%
), topological combinatorics \cite{Bjorner:Topological-methods-1995}, algebraic
combinatorics and commutative algebra
\cite{Stanley:Combinatorics-and-commutative-algebra-1996,
peeva-reiner-sturmfels98}
, poset theory
, and group theory 
\cite{Bjorner:Shellable-and-Cohen-Macaulay-partially-ordered-1980,Shareshian:On-the-shellability-of-the-order-complex-of-the-subgroup-2001}; for a more detailed
introduction 
and further references see 
\cite[$\mathsection 3$]{wachs2007poset}.

One of the reasons for its importance is that shellability---a combinatorial
property---has strong topological implications: For example, if a pure
$d$-dimensional complex $K$ is a \emph{pseudomanifold}\footnote{A pure,
$d$-dimensional complex $K$ is a pseudomanifold (with boundary) if every
$(d-1)$-face of $K$ is contained in exactly two (at most two) facets.
(Sometimes, it is additionally required that the facet-adjacency graph of $K$
is connected, but this does not matter in our setting, since shellable
complexes always satisfy this connectivity property.)}---which can be checked
in linear time---and shellable, then $K$ is homeomorphic to the sphere $S^d$
(or the ball $B^d$, in case $K$ has nonempty boundary)
\cite{danaraj1974shellings}---a property that is algorithmically undecidable
for $d\geq 5$, by a celebrated result of Novikov
\cite{Volodin:The-problem-of-discriminating-algorithmically-the-standard-1974,Nabutovsky:Einstein-structures:-Existence-versus-1995}.
More generally, every pure $d$-dimensional shellable complex is homotopy
equivalent to a wedge of $d$-spheres, in particular it is $(d-1)$-connected.

\subsection{Results}
From a computational viewpoint, it is natural to ask if one can decide efficiently (in polynomial time) whether a given complex is shellable. This question was raised at least as early as in the 1970's \cite{Danaraj:A-representation-of-2-dimensional-pseudomanifolds-and-its-use-in-the-design-1978,Danaraj:Which-spheres-are-shellable-1978} (see also  \cite[Problem~34]{Kaibel:Some-algorithmic-problems-in-polytope-2003}) and is of both practical and theoretical importance (besides direct consequences for the experimental exploration of simplicial complexes, the answer is also closely related to the question there are simple conditions that would characterize shellability).
Danaraj and Klee~proved that shellability of $2$-dimensional pseudomanifolds can be tested in linear time \cite{Danaraj:A-representation-of-2-dimensional-pseudomanifolds-and-its-use-in-the-design-1978}, whereas a number of related problems have been shown to be \textup{NP}-complete 
\cite{Egecioglu:A-computationally-intractable-problem-on-simplicial-1996,Lewiner:Optimal-discrete-Morse-functions-2003,joswig-pfetsch06,malgouyres-frances08,tancer16} (see Section~\ref{s:collapsibility}), but the computational complexity of the shellability problem has remained open.
\ifsocg Here we show:%
\else 
Here, we settle the question in the negative and show that the problem is
intractable (modulo $\textup{P}\neq\textup{NP}$).%
\fi%
\footnote{For basic notions from computational complexity, such as \textup{NP}-completeness or reductions, see, e.g., \cite{Arora:Complexity-Theory:-A-Modern-Approach-2009}.}

\begin{theorem}
\label{t:main}
Deciding if a pure $2$-dimensional simplicial complex is shellable is \textup{NP}-complete.
\end{theorem}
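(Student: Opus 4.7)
The plan is to prove NP-completeness via the two standard steps. Membership in NP is easy: a shelling order is a linear arrangement of the facets, and given such a permutation one can check in polynomial time, for each $i \ge 2$, that the intersection $\sigma_i \cap (\cup_{j<i} \sigma_j)$ is a pure $1$-dimensional subcomplex. For NP-hardness I would reduce from SAT (say 3-SAT): for every CNF formula $\varphi$ construct, in polynomial time, a pure $2$-dimensional simplicial complex $K(\varphi)$ such that $K(\varphi)$ is shellable if and only if $\varphi$ is satisfiable. The macros $\gvar$, $\gcla$, $\gand$, $\blocker{\cdot}$ already declared in the preamble indicate that the reduction proceeds through variable, clause, AND and blocker gadgets.

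For the construction I would use four kinds of components. First, a \emph{variable gadget} $\gvar_\ell$ for each variable $\ell$, containing two distinguished facets $\fl{\ell}$ and $\flb{\ell}$, designed so that any shelling must commit to placing one of $\fl{\ell}, \flb{\ell}$ before the other; the induced order encodes the truth value of $\ell$. Second, a \emph{clause gadget} $\gcla_C$ for each clause $C$, which can be shelled only after at least one facet corresponding to a literal of $C$ has already been shelled. Third, \emph{blockers} $\blocker{\ell}$ built around edges lying in three or more facets, propagating the choice made in $\gvar_\ell$ to every occurrence of $\ell$ or $\neg\ell$ in the clauses: once a facet on one side of a blocker is added, facets on the other side can no longer be shelled without violating the purity of the intersection with the previously shelled facets. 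Fourth, an \emph{AND gadget} $\gand$ at the ``top'', whose completion requires that all clause gadgets have already been shelled and without which the remaining facets of $K(\varphi)$ cannot be shelled; this globalizes the local constraints.

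For correctness, the ``if'' direction consists in writing down an explicit shelling from a satisfying assignment: shell each variable gadget according to the assignment, propagate through the blockers, complete each clause gadget via a true literal, and finally assemble the AND gadget and the rest. The ``only if'' direction requires analyzing an arbitrary shelling of $K(\varphi)$, extracting an assignment from the relative order of each pair $\fl{\ell}, \flb{\ell}$, and using the purity condition at each clause gadget to conclude that every clause is satisfied. The main obstacle will lie in the gadget design itself: the gadgets must be pure $2$-dimensional, glue together cleanly, and simultaneously be \emph{rigid} enough (so that shellability truly forces a consistent truth assignment satisfying every clause) and \emph{flexible} enough (so that every satisfying assignment produces a valid shelling). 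In particular, I would have to rule out ``cheating'' orderings that bypass the intended choice structure of a variable gadget, and ensure that consistency between distant occurrences of the same literal is enforced purely by local purity conditions on intersections---which is precisely what the blocker subcomplexes are meant to achieve.
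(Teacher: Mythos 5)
Your NP-membership argument is fine and matches the paper's remark, and your high-level architecture (variable, clause, and conjunction gadgets, reduction from $3$-SAT) is the same as the paper's. But the core mechanism you propose for the hardness direction has a genuine gap, and it is precisely the gap the paper's proof is designed to avoid. You want the gadgets to constrain the \emph{shelling order} directly: a variable gadget forces a binary choice of which of two facets comes first, and a ``blocker'' guarantees that once facets on one side are shelled, facets on the other side ``can no longer be shelled without violating purity.'' That last claim is not a sound primitive, because the shelling condition is not monotone in the way you need: the obstruction to adding a facet $\sigma_i$ is that $\sigma_i \cap \bigl(\bigcup_{j<i}\sigma_j\bigr)$ fails to be pure $1$-dimensional (e.g.\ contains an isolated vertex), and this obstruction can \emph{disappear} later, once further facets have been shelled and the isolated vertex becomes covered by an edge of the intersection. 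So a blocker cannot permanently forbid anything, and nothing in your sketch prevents ``cheating'' orderings that defer a blocked facet until the obstruction has been dissolved. You acknowledge this risk, but you have no idea on offer for how to rule it out, and this is exactly where the difficulty of the problem lives.

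The paper's proof supplies the missing idea by not working with shelling orders at all. It constructs a $2$-complex $K_\phi$ whose gadgets are modified Bing's houses ($1$-houses and $3$-houses) that control the order of \emph{elementary collapses} via free faces; ``$\sigma$ is free'' is a rigid local condition that genuinely cannot be circumvented by reordering, which is why the dependency chain (variable $\to$ clause $\to$ conjunction $\to$ unlocking) can be enforced. The truth assignment is encoded not in the relative order of two facets but in the choice of which $\tilde\chi(K_\phi)$ triangles to delete (one from each sphere $S(u)$, taken from either $D[u]$ or $D[\neg u]$). The bridge back to shellability is Hachimori's theorem: $\sd^2 K$ is shellable iff all vertex links of $K$ are connected and $K$ becomes collapsible after removing $\tilde\chi(K)$ triangles. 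The pure complex witnessing NP-hardness is therefore $\sd^2 K_\phi$, not a complex assembled from shelling-level gadgets. Without Hachimori's theorem (or an equally strong substitute that converts shelling constraints into something as rigid as free-face constraints), your plan does not go through.
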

Here, the input is given as a finite abstract simplicial complex (see Section~\ref{s:back}).%
\footnote{There are a several different ways of encoding an abstract simplicial complex---e.g., we can list the facets, or we can list all of its simplices---, but since we work with complexes of fixed dimension, these encodings can be translated into one another in polynomial time, so the precise choice does not matter.} 
\begin{remark}
The problem of testing shellability is easily seen to lie in the complexity class \textup{NP} (given a linear ordering of the facets of a complex,
it is straightforward to check whether it is a shelling). Thus, the nontrivial part of Theorem~\ref{t:main} is that 
deciding shellability of pure $2$-dimensional complexes is \textup{NP}-hard.
\end{remark}

It is easy to check that a pure simplicial complex $K$ is shellable if and only if the \emph{cone} $\{v\} * K$ is
shellable, where $v$ is a vertex not in $K$ (see Section~\ref{s:back}). Thus, the hardness of deciding shellability 
easily propagates to higher-dimensional complexes, even to cones.

\begin{corollary}\label{cor:Shell-Contractible}
For $d \ge 3$, deciding if a pure $d$-dimensional complex is shellable is \textup{NP}-complete
even when the input is assumed to be a cone (hence contractible).
\end{corollary}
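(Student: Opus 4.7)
The plan is to reduce from shellability of pure $2$-dimensional complexes, proved \textup{NP}-hard in Theorem~\ref{t:main}. Given such an input $K$ and a target dimension $d \ge 3$, I construct $K' = \{v_1\} * \{v_2\} * \cdots * \{v_{d-2}\} * K$ by iteratively coning over $d-2$ fresh apex vertices. Then $K'$ is pure of dimension exactly $d$, has size linear in $|K|$ for fixed $d$, is manifestly a cone, and hence is contractible. Membership in \textup{NP} is standard: the verifier checks an alleged shelling order facet by facet in polynomial time.

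The only substantive ingredient is the claim, asserted already in the paragraph preceding the statement, that a pure simplicial complex $L$ is shellable if and only if the cone $\{v\} * L$ is shellable (where $v$ is a vertex not appearing in $L$). For the forward direction, given a shelling $\sigma_1, \ldots, \sigma_n$ of $L$, I would show that $\{v\} * \sigma_1, \ldots, \{v\} * \sigma_n$ is a shelling of $\{v\} * L$ using the join identity
\[
\bigl(\{v\} * \sigma_i\bigr) \cap \bigcup_{j < i}\bigl(\{v\} * \sigma_j\bigr) \;=\; \{v\} * \Bigl(\sigma_i \cap \bigcup_{j<i}\sigma_j\Bigr),
\]
together with the fact that coning takes a pure $(d-1)$-dimensional complex to a pure $d$-dimensional complex. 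For the converse, observe that every facet of $\{v\} * L$ has the form $\{v\} * \sigma$ for $\sigma$ a facet of $L$, so any shelling of $\{v\} * L$ induces a linear ordering of the facets of $L$; the same identity, read in reverse, shows that this induced ordering is a shelling of $L$.

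The main obstacle is essentially negligible: the corollary is a direct consequence of Theorem~\ref{t:main}, and the only care required lies in verifying the join identity above and the preservation of purity and dimension under coning. Iterating the cone equivalence $d-2$ times converts the hardness statement in dimension $2$ into a hardness statement in any fixed dimension $d \ge 3$, with the output automatically being a cone and therefore contractible.
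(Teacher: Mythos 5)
Your proposal is correct and matches the paper's approach: the paper likewise derives the corollary from Theorem~\ref{t:main} via the equivalence ``$K$ is shellable iff $\Delta_\ell * K$ is shellable'' (stated in Section~\ref{s:back}), which is exactly your iterated cone $\{v_1\}*\cdots*\{v_{d-2}\}*K = \Delta_{d-3}*K$. You simply spell out the join identity that the paper leaves as ``easy to check.''
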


Moreover, our hardness reduction (from $3$-SAT) used in the proof of Theorem~\ref{t:main} (see Section~\ref{s:results})
turns out to be sufficiently robust to also imply hardness results for a number
of related problems.

\ifsocg
\heading{Hardness of $k$-decomposability and CL-shellability.}
\else
\heading{Hardness of $k$-decomposability.}
\fi
Let $d \ge 2$ and $k \ge 0$. A pure $d$-dimensional simplicial complex
$K$ is \emph{$k$-decomposable} if it is a simplex or if there exists a face
$\sigma$ of $K$ of dimension at most $k$ such that (i) the link of
$\sigma$ in $K$ is pure $(d-|\sigma|)$-dimensional and
$k$-decomposable, and (ii) deleting $\sigma$ and faces of $K$ containing $\sigma$
produces a $d$-dimensional $k$-decomposable complex. 
This notion, introduced by Provan and Billera~\cite{billera80}, provides
a hierarchy of properties ($k$-decomposability implies $(k+1)$-decomposability) 
interpolating between \emph{vertex-decomposable} complexes ($k=0$) 
and shellable complexes (shellability is equivalent to $d$-decomposability \cite{billera80}).
The initial motivation for considering this hierarchy was to study the \emph{Hirsch conjecture}
on combinatiorial diameters of convex polyhedra, or in the language of simplicial complex,
the diameter of the facet-adjacency graphs of pure simplicial complexes: at one end, the 
boundary complex of every $d$-dimensional simplicial polytope is shellable~\cite{bruggesser1972shellable}, 
and at the other end, every $0$-decomposable simplicial complex has small diameter 
(it satisfies the \emph{Hirsch bound}~\cite{billera80}).

\begin{theorem}
\label{t:decomposability}
  Let $d \ge 2$ and $k \ge 0$. Deciding if a pure $d$-dimensional
  simplicial complex is $k$-decomposable is \textup{NP}-hard. For $d \ge 3$,
  the problem is already \textup{NP}-hard for pure $d$-dimensional
 simplicial complexes that are cones (hence contractible). 
\end{theorem}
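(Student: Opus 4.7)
The plan is to observe that the reduction from 3-SAT used to establish Theorem~\ref{t:main} already suffices, modulo a small additional verification. The key fact is the monotonicity of the decomposability hierarchy: any shedding face of dimension at most $k$ qualifies as a shedding face of dimension at most $k'$ for every $k'\ge k$, so $0$-decomposable $\Rightarrow 1$-decomposable $\Rightarrow \cdots \Rightarrow d$-decomposable, and the last of these is, by Provan and Billera's theorem, shellability. Thus, if I can show that the complex $K$ output by the reduction of Theorem~\ref{t:main} is shellable if and only if it is $0$-decomposable, then satisfiability of the original formula is equivalent to $k$-decomposability of $K$ for every $k\ge 0$ simultaneously, and the first claim of Theorem~\ref{t:decomposability} follows uniformly in $k$.

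The nontrivial direction of this equivalence is that shellability of $K$ implies $0$-decomposability. I would inspect the shellings produced by the reduction in the YES case: reductions of this type typically associate with each variable two "truth value" vertices, with each clause a "choice" vertex, and with each variable/clause interaction a blocker gadget, and a valid shelling encodes a satisfying assignment by shedding facets in a prescribed order dictated by these special vertices. My plan is to verify, gadget by gadget, that the natural shelling can be regrouped so that at each stage the next block of facets removed is exactly the collection of facets containing a common vertex. That vertex then plays the role of shedding vertex ($|\sigma|=1$), and applying the inductive hypothesis to its link and to the deletion---each a smaller complex of the same structure---yields a full $0$-decomposition.

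For the contractibility statement when $d\ge 3$, I would rely on the standard fact that $K$ is $k$-decomposable if and only if the cone $\{v\}*K$ is: any shedding face $\sigma$ of $K$ is a shedding face of $\{v\}*K$, because $\lk_{\{v\}*K}(\sigma)=\{v\}*\lk_K(\sigma)$ and the deletion of $\sigma$ in the cone is the cone over the deletion of $\sigma$ in $K$; the converse follows by a standard exchange argument showing one may assume the shedding face avoids the apex. Iterating this coning $(d-2)$ times on the $2$-dimensional hard instance produces a contractible pure $d$-dimensional complex whose $k$-decomposability remains equivalent to satisfiability of the original 3-SAT formula.

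The main obstacle is the gadget-by-gadget verification that every shelling produced by the reduction can actually be realized as a vertex decomposition, since $k$-decomposability with $k<d$ demands shedding faces of low dimension whereas raw shellability provides no such control. If some gadget of Theorem~\ref{t:main} does not have this property on the nose, a local modification---for instance coning the offending piece, or inserting a small subdivision engineered to force a particular vertex to be shed first---should restore it at the cost of a polynomial blow-up in size, without affecting either direction of the reduction.
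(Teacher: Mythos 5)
Your high-level skeleton is the same as the paper's: sandwich $k$-decomposability between $0$-decomposability and shellability via monotonicity, and close the loop through satisfiability. But the step you yourself flag as the ``main obstacle'' is a genuine, unfilled gap, and it is exactly the step the paper handles differently. You propose to show that the output complex of Theorem~\ref{t:main} (i.e.\ $\sd^2 K_\phi$) is $0$-decomposable whenever it is shellable, by regrouping a shelling into a vertex decomposition gadget by gadget. There is no reason this should work: shellability does \emph{not} imply vertex-decomposability in general (there are shellable balls that are not vertex-decomposable), a shelling order does not in general refine to a shedding order, and vertex-decomposability also requires the \emph{links} of the shedding vertices to be recursively decomposable, which a regrouped shelling does not provide. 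So the claimed equivalence ``shellable iff $0$-decomposable'' for this particular complex would itself require a substantial bespoke proof, which the proposal does not supply; the fallback of ``a local modification should restore it'' is not an argument.

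The paper avoids this entirely by invoking a known theorem (Bj\"orner; cf.\ Provan--Billera): if $K$ is shellable then $\sd K$ is vertex-decomposable. Thus one takes \emph{one more} barycentric subdivision and works with $\Delta_{d-3} * \sd^3 K_\phi$, using that vertex-decomposability and shellability are preserved under joins with a simplex. The price of the extra subdivision is that, to close the loop, Theorem~\ref{t:hachi} must be applied to $\sd K_\phi$ rather than to $K_\phi$, which is precisely why Proposition~\ref{p:Kphi}(iii) is stated for \emph{arbitrary subdivisions} of $K_\phi$. If you want to repair your proof, replace your gadget-by-gadget verification with this subdivision step; otherwise you must prove from scratch that $\sd^2 K_\phi$ is vertex-decomposable in the satisfiable case, which is not done here. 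Your treatment of the $d\ge 3$ case (iterated coning, i.e.\ joining with $\Delta_{d-3}$) is essentially the paper's, though you only need the easy directions of the cone equivalences, since the loop is closed through shellability rather than through a converse exchange argument.
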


\ifsocg\else
\heading{Hardness of CL-shellability of posets.}
\fi
Another notion related to shellability is the \emph{CL-shellability}
of a poset, introduced in~\cite{bjorner82}. 
\ifsocg We do not reproduce the definition \else The definition of CL-shellability is
rather technical, so we do not reproduce it \fi here, but \ifsocg 
\else simply
\fi
note that a simplicial complex is shellable if and only if the dual of
its face lattice is CL-shellable~\cite[Corollary
  4.4]{bjorner83}. \ifsocg For any fixed dimension $d$ the face lattice
  can be computed in polynomial time, so we get:
  \else Since for any fixed dimension $d$, the face lattice has height $d+2$ and
  can be computed in time polynomial in the size of the $d$-complex we get:\fi

\begin{corollary}%
  \ifsocg%
  Deciding whether a given poset is CL-shellable is \textup{NP}-hard.
  \else%
  For any fixed $d\geq 4$, deciding CL-shellability of posets of height at most $d$ is \textup{NP}-hard.
  \fi%
\end{corollary}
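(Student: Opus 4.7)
The plan is to reduce from shellability of pure $2$-dimensional simplicial complexes to CL-shellability of posets, invoking the characterization \cite[Corollary 4.4]{bjorner83} already quoted above: a pure simplicial complex $K$ is shellable if and only if the dual of its face lattice is CL-shellable. Since Theorem~\ref{t:main} asserts that deciding shellability is \textup{NP}-hard already for pure $2$-dimensional complexes, it suffices to exhibit a polynomial-time many-one reduction from that problem whose output is a poset of bounded height.

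Given a pure $2$-dimensional simplicial complex $K$ on $n$ vertices, I would first build the face lattice $F(K)$, whose elements are the faces of $K$ together with a formally adjoined minimum $\hat{0}$ and maximum $\hat{1}$, ordered by inclusion. Because $\dim K = 2$, the total number of faces is $O(n^3)$ and the covering relations are read off directly from inclusions, so $F(K)$ and its opposite poset $F(K)^{op}$ can be written down in polynomial time. Feeding $F(K)^{op}$ to an oracle that decides CL-shellability then decides shellability of $K$ by \cite[Corollary 4.4]{bjorner83}; this shows \textup{NP}-hardness of CL-shellability in general.

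To sharpen the statement to height at most $d$ for fixed $d\ge 4$, I would bound the height of the output. Every maximal chain in $F(K)^{op}$ has the form $\hat 0 < v < e < \sigma < \hat 1$ with $v$ a vertex of a triangle $\sigma$ and $e$ an edge of $\sigma$; its length is $4$, so $F(K)^{op}$ has height exactly $4$. Hence for every fixed $d\ge 4$, deciding CL-shellability is already \textup{NP}-hard on posets of height at most $d$.

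No real obstacle arises: the hardness is inherited from Theorem~\ref{t:main} through the Björner--Wachs equivalence, the polynomial-time construction of $F(K)^{op}$ is immediate for complexes of fixed dimension, and the only verification worth noting is the height computation, which is trivial once the shape of the maximal chains in $F(K)$ is observed.
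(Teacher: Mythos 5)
Your proposal is correct and matches the paper's own argument exactly: the paper also derives the corollary by combining Theorem~\ref{t:main} with the Bj\"orner--Wachs equivalence \cite[Corollary 4.4]{bjorner83}, noting that for fixed dimension the (dual) face lattice is polynomial-time computable and has height $d+2$ (hence $4$ for $2$-complexes). The only difference is that you spell out the height computation and the $O(n^3)$ size bound, which the paper leaves implicit.
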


\subsection{Related Work on Collapsibility and Our Approach}
\label{s:collapsibility}

Our proof of Theorem~\ref{t:main} builds on earlier results concerning \emph{collapsibility}, a combinatorial 
analogue, introduced by Whitehead~\cite{Whitehead:Simplicial-Spaces-Nuclei-and-m-Groups-1939}, 
of the topological notion of contractibility.\footnote{%
Collapsibility implies contractibility, but the latter property is undecidable for complexes of dimension at least $4$
(this follows from Novikov's result~\cite{Volodin:The-problem-of-discriminating-algorithmically-the-standard-1974},
see \cite[Appendix~A]{tancer16}), whereas the problem of deciding collapsibility lies in NP.} 
A face $\sigma$ of a simplicial complex $K$ is \emph{free} if there is a unique inclusion-maximal face $\tau$ 
of $K$ with $\sigma \subsetneq \tau$. An \emph{elementary collapse} is the operation of deleting a free face and all faces
containing it. A simplicial complex $K$ \emph{collapses} to a subcomplex $L \subseteq K$ if $L$ can be obtained from $K$ 
by a finite sequence of elementary collapses; $K$ is called \emph{collapsible} if it collapses to a single vertex. 

The problem of deciding whether a given $3$-dimensional complex is collapsible is NP-complete~\cite{tancer16};
the proof builds on earlier work of Malgouyres and Franc\'{e}s~\cite{malgouyres-frances08}, who showed 
that it is NP-complete to decide whether a given $3$-dimensional complex collapses to some $1$-dimensional 
subcomplex. By contrast, collapsibility of $2$-dimensional complexes can be decided in polynomial time (by a greedy algorithm)~\cite{joswig-pfetsch06,malgouyres-frances08}. 
It follows that for any \emph{fixed} integer $k$, it can be decided in
polynomial time whether a given $2$-dimensional simplicial complex can be made
collapsible by deleting at most $k$ faces of dimension $2$; by contrast, the
latter problem is NP-complete if $k$ is part of the input
\cite{Egecioglu:A-computationally-intractable-problem-on-simplicial-1996}.\footnote{We
remark that building
on~\cite{Egecioglu:A-computationally-intractable-problem-on-simplicial-1996},
a related problem, namely computing \emph{optimal discrete Morse matchings} in
simplicial complexes (which we will not define here), was also shown to be
NP-complete~\cite{Lewiner:Optimal-discrete-Morse-functions-2003,joswig-pfetsch06}.}

Our reduction uses the gadgets introduced by Malgouyres and
Franc\'{e}s~\cite{malgouyres-frances08} and reworked in~\cite{tancer16}
to prove NP-hardness of deciding collapsibility for $3$-dimensional complexes.
However, these gadgets are not pure: they contain maximal simplices of two different
dimensions, $2$ and~$3$. Roughly speaking, we fix this by replacing the $3$-dimensional 
subcomplexes by suitably triangulated $2$-spheres and modifying the way in which they
are glued. Interestingly, this also makes our reduction robust to
subdivision and applicable to other types of decomposition.

\heading{Collapsibility and shellability.} Furthermore, we will use the following connection between shellability and collapsibility, 
due to Hachimori~\cite{hachimori08} (throughout, $\tilde \chi$ denotes the reduced Euler
characteristic).

\begin{theorem}[{\cite[Theorem~8]{hachimori08}}]
  \label{t:hachi}
  Let $K$ be a $2$-dimensional simplicial complex. The second
  barycentric subdivision $\sd^2 K$ is shellable if and only if the
  link of each vertex of $K$ is connected and there exists $\tilde
  \chi (K)$ triangles in $K$ whose removal makes $K$ collapsible.
\end{theorem}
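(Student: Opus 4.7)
The plan is to prove the two implications separately, as they require different techniques, and to rely on the standard correspondence between spanning facets of a shelling and collapse obstructions.

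\emph{Forward direction.} Assume $\sd^2 K$ is shellable. For the link condition, I would invoke the standard fact that every vertex link in a shellable complex is itself shellable. For each vertex $v$ of $K$, the link $\lk_{\sd^2 K}(v)$ is therefore a shellable pure $1$-complex, hence a connected graph, and since it is a subdivision of $\lk_K(v)$, connectedness transfers to $K$. For the triangle-removal condition, recall that in any shelling $\sigma_1,\ldots,\sigma_n$ of a pure $2$-complex a facet $\sigma_i$ is \emph{spanning} if $\sigma_i \cap \bigl(\bigcup_{j<i}\sigma_j\bigr) = \partial\sigma_i$; the number of spanning facets equals $\tilde\chi(\sd^2 K) = \tilde\chi(K)$, and removing them leaves a complex that collapses to a vertex by reversing the non-spanning shelling steps. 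I would then identify a set $S$ of $\tilde\chi(K)$ triangles of $K$ whose subdivisions carry the spanning facets, and argue that the induced collapses on $\sd^2(K\setminus S)$ can be organized to project to an elementary collapse sequence in $K\setminus S$.

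\emph{Backward direction.} The main step is a key lemma: if $K$ is a pure collapsible $2$-complex with connected vertex links, then $\sd^2 K$ is shellable. I would prove this by reverse induction on the collapse sequence of $K$. Each elementary collapse of a free edge (or vertex) $\sigma$ through a triangle $\tau$ in $K$ is expanded into a block of elementary collapses inside the star of $\sigma$ in $\sd^2 K$; reversing the block yields a partial shelling, and concatenating these partial shellings along the entire collapse sequence gives a shelling of $\sd^2 K$. The double subdivision is essential: the first subdivision makes every link combinatorially regular enough to be shelled, and the second gives enough room inside each original triangle to interleave its local shelling with those of neighbouring triangles. For the full statement, one applies the key lemma to $K' := K \setminus \{T_1,\ldots,T_m\}$ and then appends the facets of $\sd^2 T_1,\ldots,\sd^2 T_m$ to the shelling of $\sd^2 K'$, attaching each block along the full boundary of $T_i$ so that the new facets behave as spanning facets.

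\emph{Main obstacle.} The hardest step is the key lemma, specifically converting an arbitrary collapse order on $K$ into a shelling order on $\sd^2 K$ in such a way that each intersection with previously shelled facets is pure $1$-dimensional and never acquires an isolated vertex or edge. This is precisely where the connectedness of vertex links in $K$ is used: it guarantees that whenever the shelling enters the star of a vertex, the portion of that vertex's link already built up remains a connected $1$-complex, so that each newly attached subdivided triangle meets the prior union in a pure $1$-dimensional subcomplex, as required by the definition of shellability.
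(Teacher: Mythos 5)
First, a point of order: the paper does not prove this statement at all---it is imported verbatim as \cite[Theorem~8]{hachimori08} and used as a black box---so there is no proof of the authors' to compare yours against; what follows is an assessment of your sketch on its own terms. Your forward direction starts correctly (links of vertices in a pure shellable complex are shellable, hence connected, and $\lk_{\sd^2 K}v \cong \sd^2(\lk_K v)$; the spanning facets of a shelling of a pure $2$-complex number $\tilde\chi$ and their removal leaves a collapsible complex). But the step that actually carries the content---``the induced collapses on $\sd^2(K\setminus S)$ can be organized to project to an elementary collapse sequence in $K\setminus S$''---is asserted, not argued, and it is precisely the hard part. There is no general mechanism for pushing a collapse of a barycentric subdivision down to the original complex (the standard implication runs the other way: $K\searrow L$ implies $\sd K\searrow \sd L$), and removing a whole closed triangle $T$ of $K$ is a different operation from removing one small facet of $\sd^2 T$, so even the complexes you are comparing do not match. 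Moreover, you need the $\tilde\chi(K)$ spanning facets to lie in $\tilde\chi(K)$ \emph{distinct} triangles of $K$ (otherwise $|S|<\tilde\chi(K)$ and $K\setminus S$ has nonzero reduced Euler characteristic, hence cannot be collapsible); this is true but requires an argument you do not supply.

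The backward direction has the same character: the ``key lemma'' (collapsible with connected links implies $\sd^2$ is shellable) \emph{is} Hachimori's theorem, and the proposed mechanism---expand each elementary collapse into a block of collapses in $\sd^2 K$ and then ``reverse the block'' to get a partial shelling---conflates two different operations. Reversing elementary collapses produces elementary expansions, not shelling steps; a shelling must enumerate \emph{all} facets of $\sd^2 K$ so that each meets the union of its predecessors in a nonempty pure $1$-complex, and nothing in the sketch explains how the blocks exhaust $\sd^2\tau$ for each triangle $\tau$, why consecutive blocks satisfy the intersection condition at their seams, or what concretely the second subdivision buys (the sentence offered is a heuristic, not an argument). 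There is also a purity problem: $K' = K\setminus\{T_1,\dots,T_m\}$ is in general not pure (edges of the $T_i$ may no longer lie in any triangle), so the key lemma as you state it does not even apply to $K'$, and the final ``append the $\sd^2 T_i$ as spanning blocks'' step ignores that each $\sd^2 T_i$ consists of many facets, only one of which can play the role of a spanning facet. In short, the outline identifies the right skeleton of the equivalence but leaves unproved exactly the two transfers (collapse of $K$ $\leftrightarrow$ shelling of $\sd^2 K$) that constitute the theorem.
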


At first glance, Hachimori's theorem might suggest to prove Theorem~\ref{t:main}
by a direct polynomial-time reduction of collapsibility to shellability. However, for
$2$-dimensional complexes this would not imply hardness, 
since, as mentioned above, collapsibility of $2$-dimensional complexes is
decidable in polynomial time~\cite{joswig-pfetsch06,malgouyres-frances08}.
Instead, we will use the existential part of Hachimori's theorem (``there
exists $\tilde \chi (K)$ triangles'') to encode instances of the 3-SAT problem, 
a classical NP-complete problem.

\ifsocg
\section{Notation and Terminology}
\else
\section{Background and Terminology}
\fi
\label{s:back}

\ifsocg
We give here an overview of the basic terminology, including the
notions used but not defined in the introduction. We assume that the
reader is familiar with standard concepts regarding simplicial
complexes, and mostly list the notions we use and set up the notation.

We recall that the input in Theorem~\ref{t:main} is assumed to be described as
an abstract simplicial complex,\footnote{A (finite) \emph{abstract simplicial
complex} is a collection $K$ of subsets of a finite set $V$ that is
closed under under taking subsets, i.e., if $\sigma \in K$ and
$\tau \subseteq \sigma$, then $\tau \in K$. The elements $v\in V$ are
called the \emph{vertices} of $K$ (and often identified with the
singleton sets $\{v\}\in K$), and the elements of $K$ are
called \emph{faces} or \emph{simplices} of $K$.  The \emph{dimension}
of a face is its cardinality minus $1$, and the \emph{dimension} of
$K$ is the maximum dimension of any face. This is a purely
combinatorial description of a simplicial complex and a natural input
model for computational questions.} i.e., a purely combinatorial
object. 
For the purposes of the exposition, however, it will be more convenient
to use a description via geometric simplicial complexes.\footnote{A
(finite) \emph{geometric simplicial complex} is a finite collection
$K$ of geometric simplices (convex hulls of affinely independent
points) in $\R^d$ (for some $d$) such that (i) if $\sigma \in K$ and
$\tau$ is a face of $\sigma$, then $\tau$ also belongs to $K$, and
(ii) if $\sigma_1, \sigma_2 \in K$, then $\sigma_1 \cap \sigma_2$ is a
face of both $\sigma_1$ and $\sigma_2$. There is a straightforward translation 
between the two descriptions (see, e.g.~\cite[Chapter~1]{Matousek:BorsukUlam-2003}), 
and this is the setting we will work in for the rest of the article. 
}
In fact, in our construction, we will sometimes first describe a polyhedron\footnote{
The \emph{polyhedron} of a geometric
simplicial complex $K$
is defined as the union of simplices contained in $K$,
$\bigcup_{\sigma \in K} \sigma$. We also say that $K$ \emph{triangulates} $X
\subseteq \R^d$ if $X$ is the polyhedron of $K$.
Note that a given polyhedron usually has many different triangulations.}
and only then a geometric simplicial complex triangulating the polyhedron, with the understanding that 
this is simply a convenient way to specify the associated abstract simplicial complex.
\else
We include here a brief summary of the main notions that we use
(except for the notions already defined in the introduction, such as
pure, shellable, and collapsible simplicial complexes and free faces and 
elementary collapses).

\heading{Simplicial complexes.} A (finite) \emph{abstract simplicial complex}
is a collection $K$ of subsets of a finite set $V$ that is closed under under taking subsets, 
i.e., if  $\sigma \in K$ and $\tau \subseteq \sigma$, then $\tau \in K$. The elements $v\in V$
 are called the \emph{vertices} of 
$K$ (and often identified with the singleton sets $\{v\}\in K$), and the elements of $K$ are called \emph{faces} or \emph{simplices} of $K$.
The \emph{dimension} of a face is its cardinality minus $1$, and the
\emph{dimension} of $K$ is the maximum 
dimension of any face. This is a purely combinatorial description of a simplicial complex and a natural input model for computational questions.

For the purposes of exposition, in particular for describing the gadgets used
in the reduction, it will be more convenient to use an alternative, geometric
description of simplicial complexes: A (finite) \emph{geometric simplicial
complex} is a finite collection $K$ of geometric simplices (convex hulls of
affinely independent points) in $\R^d$ (for some $d$) such that (i) if $\sigma
\in K$ and $\tau$ is a face of $\sigma$, then $\tau$ also belongs to $K$, and
(ii) if $\sigma_1, \sigma_2 \in K$, then $\sigma_1 \cap \sigma_2$ is a face of
both $\sigma_1$ and $\sigma_2$. The \emph{polyhedron} of a geometric
simplicial complex $K$
is defined as the union of simplices contained in $K$,
$\bigcup_{\sigma \in K} \sigma$. We also say that $K$ \emph{triangulates} $X
\subseteq \R^d$ if $X$ is the polyhedron of $K$. 
Note that a given polyhedron usually has many different triangulations.

There is a straightforward way of translating between the two descriptions
(see, e.g.  \cite[Chapter~1]{Matousek:BorsukUlam-2003}):
On the one hand, for every geometric simplicial complex $K$ there is an 
associated abstract simplicial complex, namely the collection of sets of vertices of the simplices of $K$ 
(considered as finite sets, neglecting their geometric position). 
Conversely, for any given abstract simplicial complex $K$, there is
a geometric simplicial complex whose associated abstract simplicial complex
is (isomorphic to) $K$: For a sufficiently large $d$, 
pick affinely independent points $p_v\in \R^d$, one for each vertex $v$ of $K$,
and let the simplices of the geometric complex be the convex hulls of the point
sets $\{p_v\colon v\in \sigma\}$, for all $\sigma \in K$.

For the rest of the article we work in the setting of geometric simplicial
complexes (except for the definition of joins, see below, which is simpler for
abstract simplicial complexes),%
with the understanding that a geometric simplicial complex is simply
a convenient way to describe the associated abstract simplicial complex. (In
particular, we will 
not care about issues such as coordinate complexity of the geometric complex.)
\fi

\ifsocg
A \emph{subdivision} of a (geometric) complex $K$ is a complex $K'$ such that 
the polyhedra of $K$ and of $K'$ are coincide 
and every simplex of $K'$ is contained in some simplex of $K$. 
The \emph{reduced Euler characteristic} of a complex $K$
is defined as
$\tilde \chi (K) = \sum_{i={-1}}^{\dim K} (-1)^i f_i(K)$
where $f_i(K)$ is the number of $i$-dimensional faces of $K$ and,
by convention, $f_{-1}(K)$ is $0$ if $K$ is empty and $1$ otherwise.

For the definitions of \emph{links}, the \emph{barycentric subdivision}, $\sd K$,
or the \emph{join} $K * L$ of two complexes $K$ and $L$ we refer to the standard sources
such as~\cite[Chapter~1]{Matousek:BorsukUlam-2003} (or to Appendix~\ref{a:lsj}). 
We denote by $\Delta_\ell$ the simplex of dimension $\ell$.

\else
\heading{Links, subdivisions, and joins.}
Let $K$ be a (geometric) simplical complex. The \emph{link} of a vertex $v$ in $K$ is
defined as 
$$\lk_K v := \{\sigma \in K\colon v \not\in \sigma \hbox{ and }
\conv(\{v\} \cup \sigma) \in K\}.$$

A \emph{subdivision} of a complex $K$ is a complex $K'$ such that 
the polyhedron of $K$ coincides with the polyhedron of $K'$
and every simplex of $K'$ is
contained in some simplex of $K$. 

We will use a specific class of subdivision, called barycentric
subdivision. Given a nonempty simplex $\sigma \in \R^d$, let
$b_\sigma$ denote its barycenter (we have $v = b_{v}$ for a vertex
$v$). The \emph{barycentric subdivsion} of a complex $K$ in $\R^d$ is
the complex $\sd K$ with vertex set $\{b_{\sigma} \colon \sigma \in K
\setminus \{\emptyset\}\}$ and whose simplices are of the form $\conv
(b_{\sigma_1}, \cdots, b_{\sigma_k})$ where $\sigma_1 \subsetneq
\sigma_2 \subsetneq \ldots \subsetneq \sigma_k \in K \setminus
\{\emptyset\}$. The \emph{$\ell$th barycentric subdivision} of $K$,
denoted $\sd^\ell K$, is the complex obtained by taking successively
the barycentric subdivision $\ell$ times.

The \emph{join} $K *L$ of two (abstract) simplicial complexes with disjoint sets
of vertices is the complex $K*L = \{\sigma \cup \tau \colon \sigma \in
K, \tau \in L\}$. In particular, note that $\{\emptyset\} * K =
K$. For $\ell\geq 0$, let $\Delta_\ell$ denote the \emph{$\ell$-dimensional simplex};%
\footnote{Considered as a simplicial complex consisting of all faces of the simplex, including the simplex itself; 
as an abstract simplicial complex, $\Delta_\ell$ consists of all the subsets of an $(\ell+1)$-element set.} 
we extend this to the case $\ell=-1$, by using the convention that $\Delta_{-1}=\{\emptyset\}$ 
is the abstract simplicial complex whose unique face is the empty face $\emptyset$. Note that, for a pure simplicial complex $K$, an ordering $\sigma_1, \sigma_2, \ldots, \sigma_n$ of the
facets of $K$ is shelling if and only if the ordering $\Delta_\ell \cup \sigma_1, \Delta_\ell \cup \sigma_2, \ldots,
\Delta_\ell \cup \sigma_n$ is a shelling of $\Delta_\ell * K$.

\heading{Reduced Euler characteristic.}
The \emph{reduced Euler characteristic} of a complex $K$
is defined as
$$
\tilde \chi (K) = \sum\limits_{i={-1}}^{\dim K} (-1)^i f_i(K)
$$
where $f_i(K)$ is the number of $i$-dimensional faces of $K$ and,
by convention, $f_{-1}(K)$ is $0$ if $K$ is empty and $1$ otherwise.
\fi

\ifsocg\else
  \heading{$3$-SAT problem.} For our reduction, we use the $3$-SAT problem
  (a classical \textup{NP}-hard problem). The $3$-SAT problem takes as
  input a $3$-CNF formula $\phi$, that is, a Boolean formula which is
  a conjunction of simpler formulas called \emph{clauses}; each clause
  is a disjunction of three literals, where a \emph{literal} is a
  variable or the negation of a variable. An example of $3$-CNF
  formula is
  \[ \phi' = (x_1 \vee x_2 \vee \neg x_3) \wedge
  (\neg x_1 \vee \neg x_2 \vee x_4).\]
  The \emph{size} of a formula is the total number of literals
  appearing in its clauses (counting repetitions). The output of the
  $3$-SAT problem states whether $\phi$ is satisfiable, that is,
  whether we can assign variables \true{} or \false{} so that the formula
  evaluates as \true{}.  The formula $\phi'$ given above is satisfiable, for example
  by setting $x_1$ to \true{}, $x_2$ to \false{} and $x_3$ and $x_4$
  arbitrarily. 
\fi

\section{The Main Proposition and its Consequences}
\label{s:results}

The cornerstone of our argument is the following construction:

\begin{proposition}
  \label{p:Kphi}
  There is an algorithm that, given a $3$-CNF formula%
\ifsocg  
\footnote{That is, a boolean formula in conjunctive normal form such that each clause consists of three literals.}
\fi  
   $\phi$, produces,
  in time polynomial in the size of $\phi$, a $2$-dimensional 
  simplicial complex $K_\phi$ with the following properties:
  \begin{enumerate}[(i)]
  \item the link of every vertex of $K_\phi$ is connected,
  \item if $\phi$ is satisfiable, then $K_\phi$ becomes collapsible after
    removing some $\tilde \chi (K_\phi)$ triangles,
  \item if an arbitrary subdivision of $K_\phi$ becomes collapsible
    after removing some $\tilde \chi (K_\phi)$ triangles, then $\phi$
    is satisfiable.
  \end{enumerate}
\end{proposition}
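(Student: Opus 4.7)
The plan is to reduce from 3-SAT, building $K_\phi$ from local gadgets encoding variables, literals, and clauses in the spirit of the constructions of Malgouyres--Franc\'es~\cite{malgouyres-frances08} and Tancer~\cite{tancer16} for NP-hardness of $3$-dimensional collapsibility, but pushed down to the pure $2$-dimensional setting. The crucial adaptation is to replace the $3$-dimensional \emph{blocker} subcomplexes, which in the earlier constructions were $3$-balls whose boundary contained no free face and thus obstructed collapse, by triangulated $2$-spheres. A triangulated $2$-sphere has $\tilde\chi = 1$ and no free face of its own, so it can only be collapsed after removing at least one of its triangles; the ``one removed triangle per blocker'' bookkeeping is precisely the slack that Hachimori's criterion (Theorem~\ref{t:hachi}) needs in order to encode a satisfying assignment.

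I would arrange $K_\phi$ so that its reduced Euler characteristic equals exactly the number $N$ of blocker spheres in the construction, by gluing each blocker to the rest of $K_\phi$ only along lower-dimensional pieces that contribute nothing to $\tilde\chi$. For each variable $x$ the gadget would expose two distinguished free edges labelled $x$ and $\neg x$, so that once a single triangle adjacent to one of them is ``opened,'' collapse propagates along the clauses in which that literal appears but remains blocked on the other side. For each clause, the three incoming literal ports would be wired into three blocker spheres so that a given sphere can be collapsed as soon as it has lost one triangle and at least one of its literals has been activated. Connectedness of every vertex link (property (i)) would be enforced by local thickening of the attachments. If $\phi$ admits a satisfying assignment, choosing for each blocker the triangle reached by a satisfied literal's collapse wave gives $N = \tilde\chi(K_\phi)$ triangles whose removal makes $K_\phi$ collapsible, which proves~(ii).

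For (iii), suppose that $\tilde\chi(K_\phi)$ triangles have been removed from some subdivision $K'$ of $K_\phi$ so that $K'$ collapses to a point. The key observation is that each subdivided blocker is still a triangulated $2$-sphere, hence has $\tilde\chi = 1$ and contains no free face within $K'$ until at least one of its own triangles is removed; the count $N = \tilde\chi(K_\phi)$ therefore forces exactly one removal inside each blocker and none anywhere else. A local analysis of the variable gadgets, which must be engineered to survive arbitrary subdivision, then shows that the removals in the blockers attached to a variable $x$ consistently correspond to either setting $x$ to \true{} or to \false{}, and the clause gadgets force the resulting assignment to satisfy $\phi$. The main obstacle will be precisely this: to design gadgets that are purely $2$-dimensional, reproduce the rigid collapse-propagation behaviour of the $3$-dimensional blockers without any interior in which to hide auxiliary simplices, and remain correct after an arbitrary subdivision, which forces every critical property (freeness of faces, collapsibility of subcomplexes, connectedness of links, $\tilde\chi$ of blockers) to be framed in subdivision-invariant terms.
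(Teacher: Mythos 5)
Your high-level architecture coincides with the paper's: one triangulated $2$-sphere per variable as a ``blocker,'' $\tilde\chi(K_\phi)$ arranged to equal the number of spheres so that a collapsible remainder forces exactly one triangle removal per sphere, and the side of the sphere losing its triangle encoding the truth value. That part of your reasoning for (iii) (a collapsible $2$-complex cannot contain a $2$-sphere, and the Euler-characteristic count pins down the removals) is sound and is exactly the paper's first step. One caution: you waver between ``one sphere per variable'' and ``three blocker spheres per clause''; only the former makes the consistency argument work, since a single removed triangle in a single sphere $S(u)$ with the two literal edges $f[u]$, $f[\neg u]$ on opposite hemispheres is what guarantees that at most one of the two literals of $u$ can ever be activated.

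The genuine gap is that everything you defer to ``gadgets engineered to survive arbitrary subdivision'' is where the entire difficulty lives, and you supply no candidate for them. The paper's mechanism rests on two concrete complexes with rigid, subdivision-invariant free-face structure: the modified Bing's house with a single free edge (the only free faces of \emph{any} triangulation are the edges subdividing one prescribed edge $f$, yet the complex collapses onto prescribed trees in its lower wall) and the $3$-house with exactly three free edges, used as the clause OR-gate. Without such complexes your ``collapse wave'' has no one-way valves: an ordinary $2$-disk attached along an edge can be entered from many boundary edges, so nothing prevents a collapse from activating a literal that was never selected by the triangle removal. Moreover, your sketch omits the conjunction gadget entirely. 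In the paper, a further $1$-house $\gand$ attached to all clause gadgets, together with per-variable $1$-houses $B(u)$ blocking the ``other half'' of each variable gadget, creates the dependency chain (Lemma~\ref{l:egand}): the untouched hemisphere $D'[\neg\ell(u)]$ can only be collapsed after $\gand'$, which can only be collapsed after every clause gadget has been entered through some literal edge, which can only happen through a literal whose hemisphere lost a triangle. This ordering argument is the substance of direction (iii), and your proposal acknowledges but does not close it.
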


\noindent
The rest of this section derives our main result and its variants from
Proposition~\ref{p:Kphi}. We then describe the construction of
$K_\phi$ in Section~\ref{s:construction} and prove
Proposition~\ref{p:Kphi} in \ifsocg Sections~\ref{s:direct} and~\ref{s:reverse}
(modulo a few claims postponed to the appendix).
\else Sections~\ref{s:links}
to~\ref{s:reverse}.\fi

\heading{Hardness of shellability.}
Proposition~\ref{p:Kphi} and Hachimori's theorem \ifsocg imply \else readily imply
\fi our
main result:

\begin{proof}[Proof of Theorem~\ref{t:main}]
  Let $\phi$ be a $3$-CNF formula and let $K_\phi$ denote the
  $2$-dimensional complex built according to
  Proposition~\ref{p:Kphi}. Since the link of every vertex of $K_\phi$
  is connected, Theorem~\ref{t:hachi} guarantees that $\sd^2 K_\phi$
  is shellable if and only if there exist $\tilde \chi (K_\phi)$
  triangles whose removal makes $K_\phi$ collapsible. Hence, by
  statements~(ii) and~(iii), the formula $\phi$ is satisfiable if and
  only if $\sd^2 K_\phi$ is shellable. Taking the barycentric
  subdivision of a two-dimensional complex multiplies its number of
  simplices by at most a constant factor. The complex $\sd^2 K_\phi$
  can thus be constructed from $\phi$ in polynomial time, and $3$-SAT
  reduces in polynomial time to deciding the shellability of
  $2$-dimensional pure complexes.
\end{proof}

\heading{Hardness of $k$-decomposability.}
Note that statement~(iii) in Proposition~\ref{p:Kphi} deals with
arbitrary subdivisions whereas Theorem~\ref{t:hachi} only mentions the
second barycentric subdivision. This extra elbow room comes at no cost
in our proof, and yields the \textup{NP}-hardness of $k$-decomposability.

\begin{proof}[Proof of Theorem~\ref{t:decomposability}]
  Assume without loss of generality that $k \le d$. Let $\phi$ be a
  $3$-CNF formula and $K_\phi$ the complex produced by
  Proposition~\ref{p:Kphi}. We have the following
  implications:\footnote{In the case $d=2$, we use the convention that $\Delta_{-1}\ast L=L$ for any simplicial complex $L$.}
\ifsocg
\begin{tabular}{rll}
  $\phi$ is satisfiable & $\Rightarrow$ &  $K_\phi$ is collapsible after removal of some $\tilde \chi(K_{\phi})$ triangles\\
\end{tabular}

\begin{tabular}{llll}
   $\Rightarrow$ & $\sd^2 K_\phi$ is shellable
  & $\Rightarrow_{(b)}$ & $\sd^3 K_\phi$ is vertex-decomposable\\
   $\Rightarrow_{(c)}$ & $\Delta_{d-3} * \sd^3 K_\phi$ is $0$-decomposable
   & $\Rightarrow_{(a)}$ & $\Delta_{d-3} * \sd^3 K_\phi$ is $k$-decomposable\\
   $\Rightarrow_{(a)}$ & $\Delta_{d-3} * \sd^3 K_\phi$ is shellable
    &   $\Rightarrow_{(d)}$ & $\sd^3 K_\phi$ is shellable \\
  $\Rightarrow$ & \multicolumn{3}{l}{$\sd  K_\phi$ is collapsible after removal
of some $\tilde \chi(K_{\phi})$ triangles}  \\
   $\Rightarrow$ & $\phi$ is satisfiable.\\
\end{tabular}

\noindent
\else
  \[\begin{array}{rll}
   \phi \text{ is satisfiable} & \Rightarrow &  K_\phi \text{ is collapsible after removal of some $\tilde \chi(K_{\phi})$ triangles}\\
   & \Rightarrow & \sd^2 K_\phi \text{ is shellable }\\
   & \Rightarrow_{(b)} & \sd^3 K_\phi \text{ is vertex-decomposable}\\
   &  \Rightarrow_{(c)} & \Delta_{d-3} * \sd^3 K_\phi \text{ is vertex-decomposable}\\
   &    \Rightarrow_{(a)} & \Delta_{d-3} * \sd^3 K_\phi \text{ is $k$-decomposable}\\
  &  \Rightarrow_{(a)} & \Delta_{d-3} * \sd^3 K_\phi \text{ is shellable}\\
    &  \Rightarrow_{(d)} & \sd^3 K_\phi \text{ is shellable} \\
    &  \Rightarrow & \sd  K_\phi \text{ is collapsible after removal of some $\tilde \chi(K_{\phi})$ triangles}   \\
  & \Rightarrow & \phi \text{ is satisfiable}\\
  \end{array}\]
\fi
  The first and last implications are by construction of $K_\phi$
  (Proposition~\ref{p:Kphi}). The second and second to last follow
  from Theorem~\ref{t:hachi}, given that Proposition~\ref{p:Kphi}
  ensures that links of 
  vertices in $K_\phi$ are connected. The
  remaining implications follow from the following known \ifsocg facts:\else facts (where
  $\Rightarrow_{(x)}$ to mean that the implication follows from
  observation~(x)):\fi

  \begin{enumerate}[(a)]
  \item if $K$ is $k$-decomposable, then $K$ is $k'$-decomposable for
    $k'\geq k$,
  \item if $K$ is shellable, then $\sd K$ is
    vertex-decomposable~\cite{bjorner97},
  \item $K$ is vertex-decomposable if and only if $\Delta_\ell * K$ is
    vertex decomposable~\cite[Prop.~2.4]{billera80},
  \item $K$ is shellable if and only if $\Delta_\ell * K$ is shellable
    \ifsocg (see Appendix~\ref{a:lsj}). \else (\emph{c.f.} Section~\ref{s:back}).\fi
  \end{enumerate}

\noindent  Since the first and last statement are identical, these are all
  equivalences. In particular, $\phi$ is satisfiable if and only if
  $\Delta_{d-3} * \sd^3 K_\phi$ is $k$-decomposable. Since this
  complex can be computed in time polynomial in the size of $K_\phi$,
  \ifsocg \emph{i.e.}, \else which is \fi polynomial in the size of $\phi$, the first statement
  follows. \ifsocg Since $\Delta_{d-3} * \sd^3 K_\phi$ is
    contractible for $d \geq 3$, the second statement~follows.\else For $d \ge 3$, $\Delta_{d-3} * \sd^3 K_\phi$ is
  contractible so the second statement follows.\fi
\end{proof}

\section{Construction}
\label{s:construction}

We now define the complex $K_\phi$ mentioned in
Proposition~\ref{p:Kphi}. This complex consists of several building
blocks, called \emph{gadgets}. We first give a ``functional'' outline
of the gadgets (in Section~\ref{s:outline}), insisting on the
properties that guided their design, before moving on to the details
of their construction and gluing (Sections~\ref{s:Bing}
and~\ref{s:detailed}).

We use the notational convention that complexes that depend on a
variable $u$ are denoted with round brackets, \emph{e.g.} $f(u)$,
whereas complexes that depend on a literal are denoted with square
brackets, \emph{e.g.} $f[u]$ or $f[\neg u]$.

\subsection{Outline of the construction}
\label{s:outline}
The gadgets forming $K_\phi$ are designed with two ideas in
mind. First, every gadget, when considered separately, can only be
collapsed starting in a few \emph{special edges}. Next, the special
edges of each gadgets are intended to be glued to other gadgets, so as
to create dependencies in the flow of collapses: if an edge~$f$ of a
gadget~$\gadg$ is attached to a triangle of another gadget
$\gadg'$, then~$\gadg$ cannot be collapsed starting by $f$ before some
part of $\gadg'$ has been collapsed.
\heading{Variable gadgets.}
For every variable $u$ we create a gadget $\var(u)$. This gadget has
three special edges; two are associated, respectively, with \true{} and
\false{}; we call the third one ``unlocking''. Overall, the
construction ensures that any removal of $\tilde \chi(K_\phi)$
triangles from $K_\phi$ either frees exactly one of the edges
associated with \true{} or \false{} in every variable gadget, or makes
$K_\phi$ obviously non-collapsible. This relates the removal of
triangles in $K_\phi$ to the assignment of variables in $\phi$. We
also ensure that part of each variable gadget remains uncollapsible
until the special unlocking edge is freed.
\heading{Clause gadgets.}
For every clause $c = \ell_1 \vee \ell_2 \vee \ell_3$ we create a
gadget $\cla(c)$. This gadget has three special edges, one per
literal $\ell_i$. Assume that $\ell_i\in \{u,\neg u\}$. Then the
special edge associated with $\ell_i$ is attached to $\var(u)$ so that
it can be freed if and only if the triangle removal phase freed the
special edge of $\var(u)$ associated with \true{} (if $\ell_i = u$) or
with \false{} (if $\ell_i = \neg u$). This ensures that the gadget
$\cla(c)$ can be collapsed if and only if one of its literals was
``selected'' at the triangle removal phase.
\heading{Conjunction gadget.}
We add a gadget $\gand$ with a single special edge, that is attached
to every clause gadget. This gadget can be collapsed only after the
collapse of every clause gadget has started (hence, if every clause
contains a literal selected at the triangle removal phase). In turn,
the collapse of $\gand$ will free the unlocking special edge of every
variable gadget, allowing to complete the collapse.
\heading{Notations.}
For any variable $u$, we denote the special edges of $\var(u)$
associated with \true{} and \false{} by, respectively, $f[u]$ and $f[\neg
  u]$; we denote the unlocking edge by $f(u)$. For every clause $c =
\ell_1 \vee \ell_2 \vee \ell_3$, we denote by $f[\ell_i,c]$ the
special edge of $\cla(c)$ associated with $\ell_i$. We denote by
$\fand$ the special edge of the conjunction gadget $\gand$. The
attachment of these edges are summarized in  
\ifsocg
the table below.
\else
Table~\ref{t:summary}.
\fi
\ifsocg\else
\begin{table}[h]
\fi
\begin{center}
  \begin{tabular}{|c|c|c|c|}
  \hline
  gadget & special edges & attached to & freed by \\
  \hline
  \hline
  &  $f[u]$ & - & triangle deletion \\ [0.2ex]
  $\var(u)$ &  $f[\neg u]$ & - & triangle deletion \\ [0.2ex]
  & $f(u)$ & $\gand$ & freeing $\fand$ \\ [0.2ex]
  \hline
  & $f[u_2,c]$ & $\var(u_2)$ & freeing $f[u_2]$ \\ [0.2ex]
  $\cla(u_2 \vee \neg u_4 \vee u_9)$ & $f[\neg u_4,c]$ & $\var(u_4)$ & freeing $f[\neg u_4]$ \\ [0.2ex]
  & $f[u_9,c]$ & $\var(u_9)$ &freeing $f[u_9]$ \\ [0.2ex]
  \hline
  $\gand$ & $\fand$ & every clause gadget & collapsing all clause gadgets\\ [0.2ex]
  \hline
  \end{tabular}
\end{center}
\ifsocg\else
\caption{Summary of the gadgets' special edges and their attachments. \label{t:summary}}
\end{table}
\fi
\heading{Flow of collapses.}
Let us summarize the mechanism sketched above. Assume that
$\phi$ is satisfiable, and consider a satisfying assignment. Remove
the triangles from each $\var(u)$ so that the edge that becomes free
is $f[u]$ if $u$ was assigned \true{}, and $f[\neg u]$ otherwise.
This will allow to collapse each clause gadget in order to make
$\fand$ free.
Consequently, we will be able to 
collapse $\gand$ and make all unlocking edges $f(u)$ free. This allows
finishing the collapses on all $\var(u)$.

On the other hand, to collapse $K_\phi$ we must 
collapse $\fand$ at some point. Before this can happen, we have
to collapse in each clause $c = \ell_1 \vee \ell_2 \vee \ell_3$ one of
the edges $f[\ell_i,c]$. This, in turn, requires that $f[\ell_i]$ has
been made free. If we can ensure that $f[\neg \ell_i]$ cannot also be
free, then we can read off from the collapse an assignment of the
variables that must satisfy every clause, and therefore $\phi$.
(If $\ell_i = u$, then we set $u$ to
\true{}, if $\ell_i = \neg u$, then we set $u$ to \false{}. If there are unassigned
variables after considering all clause, we assign them arbitrarily.)
\subsection{Preparation: modified Bing's houses}
\label{s:Bing}

Our gadgets rely on two modifications of Bing's house, a classical
example of $2$-dimensional simplicial complex that is contractible but
not collapsible. Bing's house consists of a box split into two parts
(or \emph{rooms}); each room is connected to the outside by a tunnel
through the other room; 
each tunnel is attached to the room that it traverses by a rectangle
(or \emph{wall}). The modifications that we use here make the complex
collapsible, but restricts its set of free faces to exactly one or
exactly three edges.
\heading{One free edge.} 
We use here a modification due to Malgouyres and
Franc\'es~\cite{malgouyres-frances08}. In one of the rooms (say the
top one), the wall has been thickened and hollowed out, see
\ifsocg the figure below. \else{} Figure~\ref{f:Bing1}.\fi{} We call the resulting polyhedron a Bing's house
with a single free edge, or a \emph{$1$-house} for
short. Two special elements of a $1$-house are its
\emph{free edge} (denoted $f$ and in thick stroke in
\ifsocg the figure below) \else Figure~\ref{f:Bing1}) \fi and its \emph{lower wall} rectangle (denoted $L$
and colored in light blue in \ifsocg the figure below). \else Figure~\ref{f:Bing1}). \fi We only consider
triangulations of $1$-house that subdivide the edge $f$
and the lower wall $L$. We use $1$-houses for the
following properties:

\ifsocg
  \begin{center} \includegraphics[page=1]{revised-figures_socg}
\end{center}
\else
\begin{figure}[ht]
  \begin{center} \includegraphics[page=1]{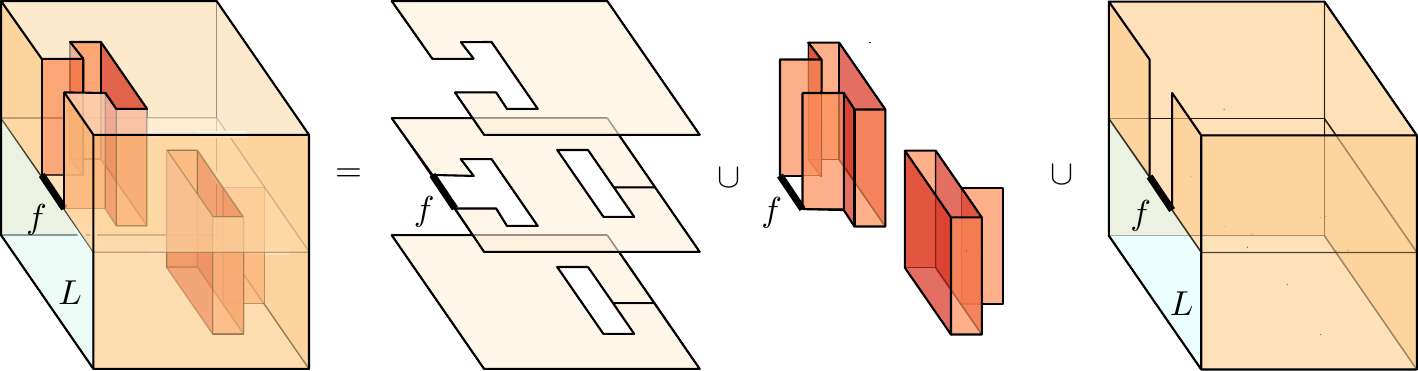}
  \ifMpic \includegraphics[page=1]{revised-figures_Martin} \fi
  \caption{Bing's house modified to be collapsible with exactly one
    free edge $f$.\label{f:Bing1}} \end{center}
\end{figure}
\fi

\begin{lemma}
  \label{l:bhsf}
  Let $B$ be a $1$-house, $f$ its free edge and $L$ its
  lower wall. In any triangulation of $B$, the free faces are exactly
  the edges that subdivide $f$. Moreover,~$B$ collapses to any subtree
  of the $1$-skeleton of $B$ that is contained in $L$ and shares with
  the boundary of~$L$ a single endpoint of $f$.
\end{lemma}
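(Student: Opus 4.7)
My plan is to prove the two assertions separately. For the free-face characterization, I would perform a local topological analysis of $B$. Every point of the polyhedron has a neighborhood of one of three types: a $2$-disc (at interior points of a wall or room face); a half-disc whose bounding segment lies on $f$ (at interior points of the free edge, which form the lip left by hollowing out the upper wall); or a ``book'' of at least three pages meeting along an arc (at interior points of wall--wall intersections). Vertices require a separate but similar case analysis. In any triangulation that subdivides both $f$ and $L$, no triangle straddles these distinguished features, so the local type of every interior point of a triangulation edge is constant and matches that of the polyhedron. A triangulation edge $e$ is free exactly when it lies in a unique $2$-simplex, which by the above happens if and only if $e \subset f$, i.e., $e$ subdivides $f$.

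For the collapsibility assertion, label the pieces of $B$: the hollow thickened upper wall $W$ (whose rim contains $f$), the upper and lower rooms $R_{\textup{up}}$ and $R_{\textup{low}}$, the tunnels $T_{\textup{up}}, T_{\textup{low}}$, and the solid lower wall $L$. Let $v$ be the distinguished endpoint of $f$, $v'$ the other one, and $e'$ a boundary edge of $L$ incident to $v'$. I would first exhibit an explicit collapse $B \searrow M$, where $M$ is obtained from $L$ by removing the relative interior of $e'$ together with its unique adjacent triangle. Starting from free edges subdividing $f$, peel $W$ inward: since $W$ is topologically an annulus with boundary $f \cup \gamma$ for some simple curve $\gamma$ on the top face of $L$, we can direct the collapse so that $W$ retracts onto $\gamma$ minus the open edge adjacent to $v'$. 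The face of $R_{\textup{up}}$ glued to $W$ along $\gamma$ now has free edges, and using these together with the newly exposed free face of $T_{\textup{up}}$ we collapse $R_{\textup{up}}$ and $T_{\textup{up}}$ onto the remaining walls. An analogous step, using further free edges exposed by the first, collapses $T_{\textup{low}}$ and then $R_{\textup{low}}$. What remains is $L$ with some arcs attached along $\gamma$, and a final round of boundary collapses reduces it to $M$.

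To conclude, $M$ is a triangulated $2$-disc with one boundary edge removed, hence collapsible onto any connected subtree of its $1$-skeleton containing $v$; in particular, it collapses onto the target subtree $T$, which by hypothesis is contained in $L$ and meets $\partial L$ only at $v$, and therefore sits inside $M$. Composing the two collapse sequences yields $B \searrow T$.

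The main obstacle is the explicit collapse sequence in the second paragraph: keeping track of which free edges become available at each stage as successive pieces are peeled off requires a careful, triangulation-independent analysis of the Bing's house topology, essentially following the strategy of Malgouyres and Franc\'es~\cite{malgouyres-frances08}. The final ``cleanup'' step, where the residual arcs along $\gamma$ must be absorbed into $M$ while retaining the chosen endpoint $v$ of $f$, is the most delicate part, since it has to work for any admissible triangulation.
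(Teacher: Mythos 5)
Your treatment of the first assertion is sound and coincides with the paper's justification: every edge of a triangulation of $B$ that does not subdivide $f$ lies in at least two triangles, so only the edges subdividing $f$ can be free (the paper simply cites \cite[Remark~1]{malgouyres-frances08} for this).

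The second assertion is where the gap lies, and it is exactly the point you flag as your ``main obstacle.'' Each stage of your peeling argument --- ``we can direct the collapse so that $W$ retracts onto $\gamma$ minus the open edge adjacent to $v'$,'' the ``final round of boundary collapses,'' and above all the concluding claim that $M$ ``is a triangulated $2$-disc with one boundary edge removed, hence collapsible onto any connected subtree of its $1$-skeleton containing $v$'' --- tacitly relies on the following nontrivial fact: \emph{an arbitrary triangulation of a disk collapses onto any tree contained in its $1$-skeleton}. This is not automatic (a priori a greedy collapse could get stuck before reaching the target tree), and it is precisely what the paper isolates as Lemma~\ref{l:KtoT} and proves by a $\Z_2$-homology argument: greedily collapse free edges not in the target tree $T$; the boundary of the sum of the surviving triangles is a $1$-cycle supported on $T$, hence trivial, hence no triangle survives; the remaining graph is contractible and contains $T$, so it is a tree and collapses to $T$. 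With that lemma in hand, the second statement reduces to applying it to the disk pieces of the house in the order in which their boundary edges get freed, and triangulation-independence comes for free; without it, your stage-by-stage tracking of free edges is not bookkeeping but the actual mathematical content, and it is left unproved. Two smaller inaccuracies: an annulus has two boundary circles, so $W$ cannot literally be ``an annulus with boundary $f \cup \gamma$'' with $f$ a single edge; and in the paper's setup $f$ is an edge of $\partial L$, so the collapse \emph{begins} by collapsing $L$ onto $t \cup \bigl((\partial L)\setminus f\bigr)$ rather than ending by arriving at $L$ --- your pieces and their order do not match the geometry the lemma's statement presupposes.
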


\noindent
The first statement follows from the fact that the edges that
  subdivide $f$ are the only ones that are not part of two triangles;
  see \cite[Remark~1]{malgouyres-frances08}. The second statement was
  proven in~\cite[Lemma 7]{tancer16} for certain trees, but the
  argument holds for arbitrary trees; we spell them out in
  Appendix~\ref{s:collapse_to_tree}.
  When working with $1$-houses, we will usually only describe the lower wall to clarify which
subtree we intend to collapse to.

\ifsocg
\else
\begin{remark}
 We note that there exist smaller 
 simplicial complexes that 
  have properties analogous to those of the $1$-house.
 The smallest such example is obtained by a slight modification of the dunce hat; has 
 seven vertices and thirteen facets and is described as the first example in~\cite[Section
 5.3]{hachimori2000}.\footnote{More precisely, the complex has vertices $\{1,2,\ldots, 7\}$, facets $\{125, 126, 127, 134, 145, 167, 234, 235, 236, 247, 356, 457, 567\}$, and its only free edge is $13$, contained in a unique facet $134$. A computer search confirmed that this (along with with four other triangulations of the modified dunce hat) is indeed a minimal example.}
For the purposes of exposition, however, we prefer to work with $1$-houses, which allows us to use some of their properties proved in~\cite{malgouyres-frances08,tancer16}. Moreover, the construction of the $1$-house is similar to the construction of another gadget, the $3$-house discussed below, and we currently do not know how to replace the latter by a smaller complex.
\end{remark}

\fi
\heading{Three free edges.}
We also use the Bing's houses with three collapsed walls introduced
in~\cite{tancer16}; we call them \emph{$3$-houses} for
short. These are $2$-dimensional complexes whose construction is more
involved; we thus state its main properties, so that we can use it as
a black box, and
\noindent
\ifsocg
\else
\ \\ \noindent
\fi
\ifsocg
\begin{minipage}{10cm}
\else
\begin{minipage}{11cm}
\fi
refer the  reader interested in its precise definition
to~\cite[$\mathsection 4$]{tancer16}.
Refer to the figure on the right
(which corresponds to Figure~9 in~\cite{tancer16}). 
The $3$-house has exactly three free edges $f_1, f_2, f_3$, and has
three distinguished paths $p_1, p_2, p_3$ sharing a common vertex $v$
and such that each $p_i$ shares exactly one vertex
with $f_i$ and no
vertex with $f_j$ for $j \neq i$. In addition, it contains an edge $e$
incident to $v$ so that the union of $p_1, p_2, p_3, f_1, f_2, f_3$
and $e$ forms a subdivided star with four rays.
\end{minipage}
\ifsocg
\begin{minipage}{4cm}
  \begin{center} \includegraphics[keepaspectratio,page=2]{revised-figures_socg} \end{center}
\end{minipage}
\else
\begin{minipage}{5cm}
  \begin{center} \includegraphics[keepaspectratio,page=2]{revised-figures} \end{center}
\end{minipage}
\fi

 Let $C$ denote the $3$-house as described
above. In~\cite{tancer16}, the polyhedron of $C$ is described in
detail but no triangulation is specified. We are happy with any
  concrete triangulation for which Lemma~\ref{l:clause} below holds;
  we can in addition require that the paths $p_1$, $p_2$ and $p_3$
  each consist of two edges.\footnote{The value two is not important here; what matters is to fix some value that can be used throughout the construction.}

\begin{lemma}[{\cite[Lemma~8]{tancer16}}]
  \label{l:clause}
  In any subdivision of $C$, the free faces are exactly the edges that
  subdivide $f_1$, $f_2$ and $f_3$. Moreover, $C$ collapses to the
  $1$-complex spanned by $e, p_1, p_2, p_3$ and any two of
  $\{f_1,f_2,f_3\}$.
\end{lemma}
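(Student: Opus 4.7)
The plan is to treat the two statements in parallel with the proof of Lemma~\ref{l:bhsf}. For the characterization of free faces, the key observation is the local one: an edge of a $2$-dimensional simplicial complex is a free face if and only if it is contained in exactly one triangle. Since the $3$-house $C$ is assembled from three copies of the thickened-wall construction used in the $1$-house, glued together along their lower walls and a central structure, the points of the underlying polyhedron at which $C$ fails to be a $2$-manifold without boundary consist of the gluing loci (where at least three sheets meet) together with the free boundary arcs of the walls. The edges $f_1, f_2, f_3$ are precisely the boundary arcs that are not incident to any gluing locus. Consequently, in any subdivision of $C$, the edges subdividing $f_1$, $f_2$, $f_3$ are contained in exactly one triangle, while every other edge is contained in at least two. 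This establishes the first statement.

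For the second statement, by the symmetry of the construction it suffices to exhibit a collapse of $C$ onto the $1$-complex spanned by $e, p_1, p_2, p_3, f_1, f_2$. The idea is to consume the ``third sector'' of $C$ first, initiating the collapse at a free edge subdividing $f_3$. An elementary collapse at $f_3$ starts a cascade that sweeps across the sheet of triangles bounded by $f_3$, exactly as in the collapse of a $1$-house described in Lemma~\ref{l:bhsf}; one chooses the target subtree of the lower wall of this sector to be $p_3$ together with the portion of $e$ joining $v$ to the other sectors. Once the third sector is reduced to $p_3 \cup e$, edges at which it was previously glued to the central structure are ``released'': they now belong to only one triangle of the central structure, which in turn becomes locally collapsible. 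A similar wave of collapses disassembles the central structure, leaving the first two sectors in the position of $1$-houses with free edges $f_1$ and $f_2$, respectively. Applying Lemma~\ref{l:bhsf} to each of these sectors finishes the collapse, leaving $p_1 \cup f_1$ and $p_2 \cup f_2$ attached to $v$ via $e$.

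The main obstacle is the bookkeeping: one must verify that the collapse of the third sector does not propagate into the other two sectors, and that after the third sector is gone the remaining complex actually carries the free edges necessary to continue. This depends on the specific way in which the three sectors are glued to the central structure in the construction of $C$ given in \cite{tancer16}; that construction is arranged so that each sector is attached only along its lower wall and along the corresponding distinguished path $p_i$, and so that removing one sector exposes exactly the free edges needed in the central structure. Verifying these local conditions amounts to a finite case check on the combinatorics of the gluing, after which the prescribed collapse sequence goes through and the lemma follows by the $\mathbb{Z}/3$ symmetry of $C$.
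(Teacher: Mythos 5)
The first thing to note is that the paper does not prove this statement at all: Lemma~\ref{l:clause} is imported verbatim as Lemma~8 of \cite{tancer16}, and the surrounding text explicitly says that the $3$-house is to be used ``as a black box,'' referring the reader to Section~4 of \cite{tancer16} for its definition. So there is no in-paper proof to compare against; your text is an attempted reconstruction of the external proof, and it has to be judged on whether it would actually go through.

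As written, it would not, because it is built on a guessed description of $C$ rather than on the actual construction. You describe $C$ as ``three copies of the thickened-wall construction used in the $1$-house, glued together along their lower walls and a central structure,'' and you later concede that the decisive verifications ``depend on the specific way in which the three sectors are glued to the central structure.'' But the $3$-house of \cite{tancer16} is not three $1$-houses attached to a core: it is a single Bing's house with two rooms in which three walls have been thickened and hollowed out, and those walls are distributed asymmetrically between the rooms. Two consequences. First, the appeal to ``the $\mathbb{Z}/3$ symmetry of $C$'' to reduce to one choice of the consumed edge $f_i$ is unfounded --- no such symmetry acts on the complex, so the three cases of ``any two of $\{f_1,f_2,f_3\}$'' genuinely have to be checked against the actual geometry. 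Second, the ``finite case check on the combinatorics of the gluing'' that your argument defers is precisely the content of the lemma, and it cannot be carried out from the properties stated in this paper, which are the conclusions of the lemma rather than data about $C$. Your first paragraph is sound in outline --- a free edge of a $2$-complex is one contained in a unique triangle, and the analogous claim for the $1$-house in Lemma~\ref{l:bhsf} is justified exactly this way via \cite{malgouyres-frances08} --- but even that step requires inspecting the actual local structure of $C$, which you do not have. The correct move here, and the one the paper makes, is simply to cite \cite[Lemma~8]{tancer16}; a self-contained proof would require reproducing the construction from Section~4 of that paper.
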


\subsection{Detailed construction}
\label{s:detailed}

Section~\ref{s:outline} gave a quick description of the intended
functions of the various gadgets. We now flesh them out and describe
how they are glued together.

\heading{Triangulations.}
For some parts of the complex, it will be convenient to first describe
the polyhedron, then discuss its triangulation. Our description of the
triangulation may vary in precision: it may be omitted (if any
reasonable triangulation works), given indirectly by the properties it
should satisfy, or given explicitly (for instance to make it clear
that we can glue the gadgets as announced).

\ifsocg
\else
\bigskip
\fi
\noindent
\ifsocg
\heading{Conjunction gadget.}
 The conjunction gadget $\gand$ is a $1$-house. We let
$\fand$ denote its free edge and $\vand$ one of the endpoints of
$\fand$. We further triangulate the lower wall so that $\vand$ has
\noindent
\begin{minipage}{9.5cm}
 sufficiently high degree, allowing to assign every variable $u$ to an
internal edge $f(u)$ of the lower wall incident to $\vand$. 
See the lower left wall on the right picture.
  Any triangulation satisfying these prescriptions and computable in time
  polynomial in the size of $\phi$ suits our purpose. 
\end{minipage}
\else
\begin{minipage}{10cm}
\heading{Conjunction gadget.}
The conjunction gadget $\gand$ is a $1$-house. We let
$\fand$ denote its free edge and $\vand$ one of the endpoints of
$\fand$. We further triangulate the lower wall so that $\vand$ has
sufficiently high degree, allowing to assign every variable $u$ to an
internal edge $f(u)$ of the lower wall incident to $\vand$. 
See the lower left wall on the right picture.
  Any triangulation satisfying these prescriptions and computable in time
  polynomial in the size of $\phi$ suits our purpose. 
\end{minipage}
\fi
\ifsocg
\begin{minipage}{5cm}
\smallskip
  \begin{center} \includegraphics[page=6, keepaspectratio=true]{revised-figures_socg} \end{center}
\end{minipage}
\else
\hfill
\begin{minipage}{5cm}
  \begin{center} \includegraphics[page=6, keepaspectratio=true]{revised-figures} \end{center}
\end{minipage}
\fi

\ifsocg
\else
\bigskip
\fi
\heading{Variable gadget.}
The variable gadget $\var(u)$ associated with the variable $u$
has four parts.
\ifsocg

\noindent\else
\begin{itemize}
\item[]
\fi
    1. The first part is a triangulated $2$-sphere $S(u)$ that
    consists of two disks $D[u]$ and $D[\neg u]$ sharing a common
    boundary circle $s(u)$. The circle $s(u)$ contains a distinguished
    vertex $v(u)$. The disk $D[u]$ (resp. $D[\neg u]$) has a
    distinguished edge $f[u]$ (resp. $f[\neg u]$) that joins $v(u)$ to
    its center.
\ifsocg
    \begin{center} \includegraphics[page=4,keepaspectratio=true]{revised-figures_socg} \end{center}
    \else
    \begin{center} \includegraphics[page=4, keepaspectratio=true]{revised-figures} \end{center}
\fi

\ifsocg\noindent\begin{minipage}{10cm}\else\item[]\begin{minipage}{10cm}\fi
  2. The second part is a $2$-complex $O(u)$ that consists of two
  ``boundary'' circles sharing a vertex. The vertex is identified with
  the vertex $v(u)$ of $S(u)$. One of the circles is identified with
  $s(u)$. The other circle is decomposed into two arcs: one is a
  single edge named $b(u)$, the other is a path with two edges which
  we call $p(u)$. The vertex common to $b(u)$ and $p(u)$, distinct from $v(u)$, is identified
  with the vertex~$\vand$ of the conjunction gadget.
\end{minipage}
\hfill
\ifsocg
\begin{minipage}{5cm}
\begin{center} \includegraphics[keepaspectratio=true,page=5]{revised-figures_socg} \end{center}
\end{minipage}
\else
\begin{minipage}{5cm}
\begin{center} \includegraphics[keepaspectratio=true,page=5]{revised-figures} \end{center}
\end{minipage}
\fi

\ifsocg\smallskip\noindent\begin{minipage}{10cm}\else\item[]\begin{minipage}{10cm}\fi
  3. The third part is a $1$-house $B(u)$ intended to
  block the edge $b(u) \in O(u)$ from being free as long as the
  conjunction gadget has not been collapsed. The free edge of $B(u)$
  is identified with the edge $f(u)$ in the conjunction gadget $\gand$
  and the edge $b(u) \in O(u)$ is identified with an edge of the lower
  wall of $B(u)$ that shares the vertex $\vand$ with $f(u)$.
\end{minipage}
\hfill
\ifsocg
\begin{minipage}{4cm}
  \begin{flushright} \includegraphics[keepaspectratio=true,page=7]{revised-figures_socg} \end{flushright}
\end{minipage}
\else
\begin{minipage}{4cm}
  \begin{flushright} \includegraphics[keepaspectratio=true,page=7]{revised-figures} \end{flushright}
\end{minipage}
\fi

\ifsocg\smallskip\noindent\begin{minipage}{10cm}\else\item[]\begin{minipage}{10cm}\fi
  4. The fourth part consists of two complexes, $X[u]$ and $X[\neg
    u]$. Let $\ell \in \{u, \neg u\}$ and refer to the figure on the
  right. The complex $X[\ell]$ is a $1$-house whose free
  edge is identified with the edge $f[\ell]$ from $D[\ell]$, and whose
  lower wall contains a path identified with $p(u)$. Hence, $p(u)$ is
  common to $X[u]$, $X[\neg u]$ and the second part $O(u)$. For every
  clause $c_i$ containing the literal~$\ell$, we add in the lower wall
  a two-edge path $p[\ell,c_i]$ extended by an edge $f[\ell, c_i]$;
  the path $p[\ell,c_i]$ intersects $p(u)$ in exactly $\vand$ (in
  particular, these paths and edges form a subdivided star centered at
  $\vand$).
\end{minipage}
\hfill
\ifsocg
\begin{minipage}{4cm}
  \begin{flushright} \includegraphics[keepaspectratio=true,page=8]{revised-figures_socg} \end{flushright}
\end{minipage}  
\else
\begin{minipage}{4cm}
  \begin{flushright} \includegraphics[keepaspectratio=true,page=8]{revised-figures} \end{flushright}
\end{minipage}  
\fi
\ifsocg\else\end{itemize}\fi

\smallskip
\noindent
\ifsocg
\begin{minipage}{10cm}
\else
\begin{minipage}{11cm}
\fi
\heading{Clause gadget.}
The clause gadget $\cla(c)$ associated with the clause $c = \ell_1
\vee \ell_2 \vee \ell_3$ is a $3$-house where:
\begin{itemize}
  \item the edges $f_i$ of $C$ are identified with the edges $f[\ell_i, c]$ in
    $X[\ell_i]$;
  \item the paths $p_i$ of $C$ are identified with the paths $p[\ell_i, c]$ in
    $X[\ell_i]$;
\item the vertex $v$ of $C$ is identified with the vertex $\vand$; and
\item the edge $e$ of $C$ is identified with the edge $\fand$.
\end{itemize}
\end{minipage}
\hfill
\ifsocg
\begin{minipage}{4.5cm}
  \begin{flushright} \includegraphics[keepaspectratio, page=3]{revised-figures_socg} \end{flushright}
\end{minipage}
\else
\begin{minipage}{4cm}
  \begin{center} \includegraphics[keepaspectratio, page=3]{revised-figures} \end{center}
\end{minipage}
\fi

\heading{Putting it all together.}
Let $\phi$ be a 3-CNF formula with variables $u_1,u_2, \ldots, u_n$
and clauses $c_1, c_2, \ldots, c_m$. The complex $K_\phi$ is defined as
\[ K_\phi = \gand \cup \pth{\bigcup_{i=1}^n \underbrace{S(u_i) \cup O(u_i) \cup  B(u_i) \cup X[u_i] \cup X[\neg u_i]}_{\var(u_i)}} \cup \pth{\bigcup_{j=1}^m \cla(c_j)}.\]

\ifsocg\else
To verify the proofs in Sections~\ref{s:links} to~\ref{s:reverse}, it
may be useful to be able to quickly identify for a given vertex, edge
or path which $2$-complexes contain it. We provide this in
Table~\ref{t:inclusions}.

\begin{table}
\begin{center}
\begin{tabular}{|c|c|c|c|}
\hline
  object & quantifier & in complexes \\ 
\hline\hline 
  $\vand$ & 1 occurence & $\gand$, $O(u)$, $B(u)$, $X[\ell]$, $\cla(c)$ \\ [0.2ex]
  $v(u)$ & every variable $u$ & $D[u]$, $D[\neg u]$, $O(u)$, $B(u)$,
$X[u]$, $X[\neg u]$ \\ [0.2ex]
\hline
$\fand$ & 1 occurence & $\gand$, $\cla(c)$ \\ [0.2ex]
  $f(u)$ & every variable $u$ & $\gand$, $B(u)$ \\ [0.2ex]
$f[\ell]$ & every literal $\ell$ & $D[\ell]$, $X[\ell]$ \\ [0.2ex]
  $b(u)$ & every variable $u$ & $O(u)$, $B(u)$ \\ [0.2ex]
  $p(u)$ & every variable $u$ & $O(u)$, $X[u]$, $X[\neg u]$ \\ [0.2ex]
  $s(u)$ & every variable $u$ & $O(u)$, $D[u]$, $D[\neg u]$ \\ [0.2ex]
  $f[\ell,c]$ & pairs $(\ell, c)$, $\ell \in c$ & $X[\ell]$, $\cla(c)$ \\ [0.2ex]
  $p[\ell,c]$ & pairs $(\ell, c)$, $\ell \in c$ & $X[\ell]$, $\cla(c)$ \\ [0.2ex]
  \hline
\end{tabular}
\caption{Containments of vertices, edges and paths in $2$-complexes.}
\label{t:inclusions}
\end{center}
\end{table}
\fi

\ifsocg

\subsection{Properties of $K_\phi$}
It remains to prove that $K_\phi$ has the properties required by
Proposition~\ref{p:Kphi}. Item~(i) is quite straightforward (although tedious)
and it is proved in Appendix~\ref{a:links}. The proof of items~(ii) and~(iii)
is given in the forthcoming sections. However,  first we
need to determine the reduced Euler characteristic of $K_\phi$ 
(see Appendix~\ref{a:euler} for the proof):

\begin{proposition}
\label{p:chi}
 $\tilde \chi (K_\phi)$ equals the number of variables of $\phi$.
\end{proposition}

\else
\section{Connectedness of links}
\label{s:links}

In this section, we prove Proposition~\ref{p:Kphi}(i), \emph{i.e.}
that the link of every vertex in the complex $K_\phi$ of
Section~\ref{s:construction} is connected. By construction, the
complex $K_\phi$ is covered by the following subcomplexes: $\gand$,
$S(u) \cup O(u)$, $B(u)$, $X[\ell]$ and $\cla(c)$, where $u$ ranges
over all variables, $\ell$ ranges over all literals and $c$ ranges
over all clauses. We first argue that in each subcomplex, the link of
every vertex is connected. We then ensure that these subcomplexes are
glued into $K_\phi$ in a way that preserves the connectedness of the links.

\bigskip
\noindent
\begin{minipage}{5.5cm}
  \begin{center} \includegraphics[keepaspectratio, page=9]{revised-figures} \end{center}
\end{minipage}
\hfill
\begin{minipage}{9.5cm}
\heading{Links within the subcomplexes.}
The proof is straightforward but rather pedestrian. Let us start
  with the $1$-house $B$. Most points have a link homeomorphic to
  $K_3$, so connectedness is immediate. For
  the remaining points, depicted on the left, the links are
  homeomorphic to one of $K_2$, $K_4$, $K_{2,3}$ and $K_2 \cdot K_3$,
  the graph obtained by gluing $K_2$ and $K_3$ at a vertex.  This
settles the cases of the subcomplexes $\gand$, $B(u)$ and $X[\ell]$.
\end{minipage}

\bigskip
\noindent
\begin{minipage}{9.5cm}
  A similar analysis for the subcomplex $S(u) \cup O(u)$ reveals that
  the links are homeomorphic to one of $K_2$, $K_3$, $K_{2,3}$ and the
  `bull graph', that is the graph on 5 vertices formed from the
  triangle and two edges attached to it at different vertices. 
\end{minipage}
\hfill
\begin{minipage}{5.5cm}
  \begin{center} \includegraphics[keepaspectratio, page=10]{revised-figures} \end{center}
\end{minipage}

\bigskip

Last, a careful inspection of the construction in~\cite{tancer16} of
the $3$-house yields that the link of every vertex is
homeomorphic to one of $K_2$, $K_3$, $K_{2,3}$, $K_2 \cdot K_3$,
$K_{2,4}$ or the graph obtained from a triangle by gluing three other
triangles to it, one along each edge. This covers the subcomplexes
$\cla(c)$.

\heading{Links after gluing.}
We now argue that if $v$ is a vertex shared by two of our subcomplexes
$C$ and $C'$, then there is an edge incident to $v$ and common to $C$
and $C'$. This ensures that the links of $v$ in $C$ and $C'$ share at
least a vertex, so the connectedness of $\lk_{C \cup C'} v$ follows
from that of $\lk_{C} v$ and $\lk_{C'} v$. If $v$ is shared by
subcomplexes $C_1, C_2, \ldots C_k$, we can apply this idea
iteratively by finding a sequence of edges $e_1, e_2, \ldots, e_{k-1}$
where $e_i$ is common to $C_i$ and \emph{at least one of} $C_1, C_2,
\ldots, C_{i-1}$.

\bigskip

Let us first examine $v(u)$ for some variable $u$. This vertex is
common to $S(u) \cup O(u)$, $B(u)$, $X[u]$ and $X[\neg u]$. The
connectedness of $\lk_{K_\phi} v(u)$ follows from the existence of the
following edges incident to $v(u)$:
\begin{itemize}
\item $b(u)$, common to $S(u) \cup O(u)$ and $B(u)$,
\item $p(u)$, common to $S(u) \cup O(u)$, $X[u]$ and $X[\neg u]$.
\end{itemize}

\bigskip

Let us next examine $\vand$. This vertex is common to all our
subcomplexes, that is to $\gand$, $B(u)$, $S(u) \cup O(u)$, $X[\ell]$
and $\cla(c)$ for all variable $u$, literal $\ell$, and clause
$c$. The connectedness of $\lk_{K_\phi} \vand$ follows from the
existence of the following edges incident to $\vand$:
\begin{itemize}
\item the edges $f(u)$, common to $\gand$ and $B(u)$,
\item the edges $b(u)$, common to $B(u)$ and $S(u) \cup O(u)$,
\item the edges $p(u)$, common to $S(u) \cup O(u)$, $X[u]$ and $X[\neg u]$
\item $\fand$, common to $\gand$ and every $\cla(c)$.
\end{itemize}

\bigskip

The remaining vertices shared by two or more of our subcomplexes are
defined as part of an edge common to these subcomplexes. The
connectedness of the links in $K_\phi$ of these vertices is thus
immediate. This completes the proof of Proposition~\ref{p:Kphi}(i).

\fi

\ifsocg
\else
\section{Reduced Euler characteristic of $K_\phi$}
\label{s:euler}

In this section, we compute the reduced Euler characteristic of
$K_\phi$, preparing the proofs of Proposition~\ref{p:Kphi}(ii)--(iii)
in the following sections. By inclusion-exclusion, for any simplicial complexes
$K_1$ and $K_2$ we have:
\begin{equation}
  \label{e:chi}
  \tilde \chi (K_1 \cup K_2) = \tilde \chi
  (K_1) + \tilde \chi(K_2) - \tilde \chi (K_1 \cap K_2).
\end{equation}
In particular, if both $K_2$ and $K_1 \cap K_2$ are contractible, then
$\tilde \chi(K_1 \cup K_2) = \tilde \chi (K_1)$.

\begin{proposition}
\label{p:chi}
 $\tilde \chi (K_\phi)$ equals the number of variables of $\phi$.
\end{proposition}

\begin{proof}
First, let us observe that the subcomplexes $\gand$, $B[u]$, $X[\ell]$ and
  $\cla(c)$ (for all variables $u$, literals $\ell$ and clauses $c$) are
  contractible. Indeed each of them is either a $1$-house or $3$-house which
  are collapsible by Lemmas~\ref{l:bhsf} and~\ref{l:clause}, thereby
  contractible. In addition, each of the aforementioned subcomplexes is attached to the rest of the complex in contractible subcomplexes (trees).

  Therefore, by the claim following Equation~\eqref{e:chi}, 
  we may replace each of these gadgets (in
  any order) with the shared trees without affecting the reduced Euler
  characteristic. That is, $\tilde \chi (K_\phi) = \tilde \chi (K')$
  where
$$
  K' := \fand \cup \bigcup_{u} (f(u) \cup S(u) \cup O(u)) \cup \bigcup_{(\ell, c):  \ell \in c} (f[\ell,c] \cup p[\ell,c])
$$
  where the first (big) union is over all variables $u$, and the second is over all pairs $(\ell, c)$ where a literal
  $\ell$ belongs to a clause $c$.

\smallskip\noindent
\begin{minipage}{10cm}

\qquad By collapsing the pendent edges and paths, we get $\tilde \chi (K') = \tilde
  \chi(K'')$ where
$$
K'' := \bigcup_{u} \left(S(u) \cup O(u)\right).
$$

\qquad Finally, for every variable $u$ we have
  $\tilde \chi (O(u) \cup S(u)) = 1$ as $O(u) \cup S(u)$ is homotopy
  equivalent to the $2$-sphere. For any distinct variables $u,u'$, the
  complexes $O(u) \cup S(u)$ and $O(u') \cup S(u')$ share only a
  vertex, namely~$\vand$. Equation~\eqref{e:chi} then yields that
  $\tilde \chi(K_\phi) = \tilde \chi(K'')$ is the number of variables.
\end{minipage}
\hfill
\begin{minipage}{5cm}
  \begin{center} \includegraphics[page=11]{revised-figures} \end{center}
\end{minipage}

\end{proof}

\begin{remark}
  It is possible, with slightly more effort, to show that $K_\phi$ is
  homotopy equivalent to 
  $K''$, hence 
  to a 
  wedge of spheres, one for each variable.
  This also implies Proposition~\ref{p:chi} but for our
  purpose, computing the reduced Euler characteristic suffices.
\end{remark}

\fi

\section{Satisfiability implies collapsibility}
\label{s:direct}
In this section we prove Proposition~\ref{p:Kphi}(ii), \emph{i.e.}
that if $\phi$ is satisfiable, then there exists a choice of
$\tilde{\chi}(K_\phi)$ triangles of $K_\phi$ whose removal makes the
complex collapsible.
\ifsocg
\heading{Initial steps.}
Let us fix a satisfying assignment for $\phi$. For every variable $u$,
we set $\ell(u)$ to $u$ if $u$ is \true{}~in our assignment, and to
$\neg u$ otherwise. Next, for every variable $u$, we remove a triangle from the
region
$D[\ell(u)]$ of the sphere~$S(u)$. Proposition~\ref{p:chi} ensures
that this removes precisely $\tilde{\chi}(K_\phi)$ triangles, as
announced.
\else
\heading{Literal $\ell(u)$.}
Let us fix a satisfying assignment for $\phi$. For every variable $u$,
we set $\ell(u)$ to $u$ if $u$ is \true{}~in our assignment, and to
$\neg u$ otherwise.

\heading{Triangle removal.}
For every variable $u$, we remove a triangle from the region
$D[\ell(u)]$ of the sphere~$S(u)$. Proposition~\ref{p:chi} ensures
that this removes precisely $\tilde{\chi}(K_\phi)$ triangles, as
announced.
\fi
\heading{Constrain complex.}
It will be convenient to analyze collapses of $K_\phi$ locally within
a subcomplex, typically a gadget. To do so formally, we use constrain
complexes following~\cite{tancer16}. Given a simplicial complex $K$
and a subcomplex $M$ of $K$, we define the \emph{constrain complex} of
$(K,M)$, denoted $\Gamma(K,M)$, as 
\ifsocg
$\Gamma(K,M) := \{\vartheta  \in M\colon  \exists \eta \in K \setminus M
 \hbox{ s.t. } \vartheta \subset \eta\}$.
\else
 follows:
\[  \Gamma(K,M) := \{\vartheta  \in M\colon  \exists \eta \in K \setminus M \hbox{ s.t. } \vartheta \subset \eta\}.\]
\fi

\begin{lemma}[{\cite[Lemma~4]{tancer16}}]
  \label{l:constrain}
  Let $K$ be a simplicial complex and $M$ a subcomplex of $K$. If $M$
  collapses to $M'$ and $\Gamma(K,M) \subseteq M'$ then $K$ collapses
  to $(K \setminus M) \cup M'$.
\end{lemma}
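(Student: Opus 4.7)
The plan is to lift the given collapse sequence of $M$ to a collapse sequence of $K$ that leaves every face outside $M$ untouched. Write the hypothesized collapse as a chain of elementary collapses
\[ M = M_0 \supsetneq M_1 \supsetneq \cdots \supsetneq M_k = M', \]
where step $i$ removes a free face $\sigma_i$ of $M_i$ along with all faces of $M_i$ strictly containing $\sigma_i$; these faces form an interval with top element the unique maximal face $\tau_i$ of $M_i$ above $\sigma_i$. Setting $K_i := (K \setminus M) \cup M_i$, I aim to prove that $K_0 = K$, that $K_k = (K \setminus M) \cup M'$, that each $K_i$ is a genuine subcomplex of $K$, and that $K_i$ collapses to $K_{i+1}$ by a single elementary collapse mirroring $M_i \to M_{i+1}$.

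First I would verify that $K_i$ is indeed closed under taking faces. The only configuration to worry about is a face $\rho \in K \setminus M$ with a sub-face $\rho' \in M \setminus M_i$. Since $\rho' \subsetneq \rho$ and $\rho \in K \setminus M$, the definition gives $\rho' \in \Gamma(K,M)$, and then the hypothesis $\Gamma(K,M) \subseteq M' \subseteq M_i$ forces $\rho' \in M_i$, a contradiction. Hence each $K_i$ is a subcomplex of $K$.

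The crux of the argument is the following claim: no face of $K \setminus M$ strictly contains $\sigma_i$. Indeed, such a face would witness $\sigma_i \in \Gamma(K,M) \subseteq M'$, yet $\sigma_i$ is removed at step $i$, so $\sigma_i \notin M_{i+1} \supseteq M'$, a contradiction. Granting the claim, every strict coface of $\sigma_i$ inside $K_i$ already lies in $M_i$. This immediately yields both (a) $\sigma_i$ is a free face of $K_i$ whose unique inclusion-maximal coface is exactly $\tau_i$ (any strictly larger face in $K_i$ would either lie in $M_i$, contradicting maximality of $\tau_i$ there, or in $K \setminus M$, contradicting the claim applied to $\sigma_i$), and (b) the faces removed by the elementary collapse $K_i \to K_{i+1}$ coincide with those removed by $M_i \to M_{i+1}$, so $K_{i+1} = (K \setminus M) \cup M_{i+1}$ as required.

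A straightforward induction on $i$ then chains these elementary collapses into $K = K_0 \searrow K_1 \searrow \cdots \searrow K_k = (K \setminus M) \cup M'$. I do not anticipate a real obstacle: the whole argument is careful bookkeeping, and the one subtle point, namely that maximality of $\tau_i$ is preserved when passing from $M_i$ to $K_i$, is handled by the same ``no outside coface'' observation that gives freeness of $\sigma_i$.
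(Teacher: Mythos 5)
Your proof is correct and is essentially the argument of the cited source (the paper itself only quotes this as Lemma~4 of \cite{tancer16} without reproducing the proof): one lifts the elementary collapses of $M$ to $K$ one at a time, using $\Gamma(K,M)\subseteq M'\subseteq M_i$ both to see that each $(K\setminus M)\cup M_i$ is a subcomplex and that $\sigma_i$ acquires no new cofaces from $K\setminus M$, so freeness and the removed interval are preserved. No gaps.
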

\heading{Collapses.}
We now describe a sequence of collapses enabled by the removal of the
triangles. Recall that we started from the complex
\ifsocg
$$ 
K_\phi = \gand \cup \bigl(\medcup_{i=1}^n O(u_i) \cup S(u_i) \cup B(u_i) \cup
X[u_i] \cup X[\neg u_i]\bigr) \cup \bigl(\medcup_{j=1}^m
\cla(c_j)\bigr)$$
\else
\[ K_\phi = \gand \cup \pth{\bigcup_{i=1}^n O(u_i) \cup S(u_i) \cup B(u_i) \cup X[u_i] \cup X[\neg u_i]} \cup \pth{\bigcup_{j=1}^m \cla(c_j)}\]
\fi
  where $u_1,u_2, \ldots, u_n$ and $c_1, c_2, \ldots, c_m$ are, respectively, the variables and the clauses of $\phi$. We then removed  a triangle from each $D[\ell(u)]$.
\ifsocg
\else
\begin{enumerate}[(a)]
\item \fi
  The removal of a triangle of $D[\ell(u)]$ allows to collapse
  that subcomplex to $s(u) \cup f[\ell(u)]$. This frees
  $f[\ell(u)]$. The complex becomes:
\ifsocg
    \[ K_a = \gand \cup \bigl(\medcup_{i=1}^n O(u_i) \cup D[\neg \ell(u_i)]
    \cup B(u_i) \cup X[u_i] \cup X[\neg u_i]\bigr) \cup \bigl(\medcup_{j=1}^m
    \cla(c_j)\bigr).\]
\else
  \[ K_a = \gand \cup \pth{\bigcup_{i=1}^n O(u_i) \cup D[\neg \ell(u_i)] \cup B(u_i) \cup X[u_i] \cup X[\neg u_i]} \cup \pth{\bigcup_{j=1}^m \cla(c_j)}.\]
\fi
\ifsocg

\else
\item \fi 
  We can then start to collapse the subcomplexes $X[\ell(u)]$. We
  proceed one variable at a time. Assume that we are about to proceed
  with the collapse of $X[\ell(u)]$ and let $K$ denote the current
  complex. Locally, $X[\ell(u)]$ is a $1$-house with free edge $f[\ell(u)]$. Moreover, $\Gamma(K,X[\ell(u)])$ is the tree $T(u)$
  formed by the path $p(u)$ and the union of the paths $p[\ell(u),c]
  \cup f[\ell(u),c]$ for every clause $c$ using the literal
  $\ell(u)$. Lemma~\ref{l:bhsf} ensures that $X[\ell(u)]$ can be
  locally collapsed to $T(u)$, and Lemma~\ref{l:constrain} ensures that
  $K$ can be globally collapsed to $(K \setminus X[\ell(u)]) \cup
  T(u)$. We proceed in this way for every complex $X[\ell(u)]$. The
  complex becomes:
\ifsocg
    \[ K_b = \gand \cup \bigl(\medcup_{i=1}^n O(u_i) \cup D[\neg \ell(u_i)]
    \cup B(u_i) \cup X[\neg \ell(u_i)]\bigr) \cup \bigl(\medcup_{j=1}^m
    \cla(c_j)\bigr).\]
\else
\[ K_b = \gand \cup \pth{\bigcup_{i=1}^n O(u_i) \cup D[\neg \ell(u_i)] \cup B(u_i) \cup X[\neg \ell(u_i)]} \cup \pth{\bigcup_{j=1}^m \cla(c_j)}.\]
\fi
\ifsocg

\else
\item \fi The collapses so far have freed every edge of $f[\ell(u_i),c]$. We
  now consider every clause $c_j$ in turn. Put $c_j = (\ell_1 \vee
  \ell_2 \vee \ell_3)$ and let $K$ denote the current complex. The
  assignment that we chose is satisfying, so at least one of $\ell_1$,
  $\ell_2$ or $\ell_3$ coincides with $\ell(u_i)$ for some $i$; let us
  assume without loss of generality that $\ell_1 = \ell(u_i)$. The
  edge $f[\ell_1,c]$ is therefore free and Lemma~\ref{l:clause} yields
  that locally, $\cla(c_j)$ collapses to the tree $T(c_j) = \fand \cup
  p[\ell_1,c] \cup p[\ell_2,c] \cup p[\ell_2,c] \cup f[\ell_2,c] \cup
  f[\ell_3,c]$. Moreover, $\Gamma(K,\cla(c_j)) = T(c_j)$ so
  Lemma~\ref{l:constrain} ensure that $K$ can be globally collapsed to
  $(K \setminus \cla(c_j)) \cup T(c_j)$. After proceeding in this way for
    every complex $\cla(c_j)$, \ifsocg we get: \else the complex becomes:\fi
\ifsocg
    \[ K_c = \gand \cup \bigl(\medcup_{i=1}^n O(u_i) \cup D[\neg \ell(u_i)]
    \cup B(u_i) \cup X[\neg \ell(u_i)]\bigr) \cup \bigl(\medcup_{j=1}^m
    T(c_j)\bigr).\]
\else
\[ K_c = \gand \cup \pth{\bigcup_{i=1}^n O(u_i) \cup D[\neg \ell(u_i)] \cup B(u_i) \cup X[\neg \ell(u_i)]} \cup \pth{\bigcup_{j=1}^m T(c_j)}.\]
\fi

\ifsocg

\else
\item \fi The collapses so far have freed the edge $\fand$. We can then
  proceed to collapse $\gand$. Locally, Lemma~\ref{l:bhsf} allows to
  collapse $\gand$ to the tree $T = f(u_1) \cup f(u_2) \cup \ldots
  \cup f(u_n)$. (From this point, we expect the reader to be able to
  check by her/himself that Lemma~\ref{l:constrain} allows to perform
  globally the collapse described locally.) The complex becomes:
\ifsocg
\[ K_d = \bigl(\medcup_{i=1}^n O(u_i) \cup D[\neg \ell(u_i)] \cup B(u_i) \cup
X[\neg \ell(u_i)]\bigr) \cup \bigl(\medcup_{j=1}^m T(c_j)\bigr).\]
\else
\[ K_d = \pth{\bigcup_{i=1}^n O(u_i) \cup D[\neg \ell(u_i)] \cup B(u_i) \cup X[\neg \ell(u_i)]} \cup \pth{\bigcup_{j=1}^m T(c_j)}.\]
\fi

\ifsocg

\else
\item \fi The collapses so far have freed every edge $f(u_i)$. Thus,
  Lemma~\ref{l:bhsf} allows to collapse each complex $B(u_i)$ to its
  edge $b(u_i)$. This frees the edge $b(u_i)$, so the complex $O(u_i)$
  can in turn be collapsed to $s(u_i) \cup p(u_i)$. At this point, the
  complex is:
\ifsocg
\[ K_e = \bigl(\medcup_{i=1}^n s(u_i) \cup p(u_i) \cup D[\neg \ell(u_i)] \cup
X[\neg \ell(u_i)]\bigr) \cup \bigl(\medcup_{j=1}^m T(c_j)\bigr).\]
\else
\[ K_e = \pth{\bigcup_{i=1}^n s(u_i) \cup p(u_i) \cup D[\neg \ell(u_i)] \cup X[\neg \ell(u_i)]} \cup \pth{\bigcup_{j=1}^m T(c_j)}.\]
\fi
\ifsocg

\else
\item \fi The collapses so far have freed every edge $s(u_i)$. We can thus
  collapse each $D[\neg \ell(u_i)]$ to $f[\neg \ell(u_i)]$. This frees
  every edge $f[\neg \ell(u_i)]$, allowing to collapse every
  subcomplex $X[\neg \ell(u_i)]$, again by Lemma~\ref{l:bhsf}, to the
  tree formed by the path $p(u_i)$ and the union of the paths $p[\neg
    \ell(u_i),c] \cup f[\neg \ell(u_i),c]$ for every clause $c$ using
  the literal $\neg \ell(u_i)$.
\ifsocg\else\end{enumerate}\fi

\ifsocg\else\noindent\fi
At this point, we are left with a $1$-dimensional complex. This
complex is a tree (more precisely a subdivided star centered in
$\vand$ and consisting of the paths $p(u_i)$, the paths $p[\ell,c]$
and some of the edges $f[\ell,c]$). As any tree is collapsible, this
completes the proof of Proposition~\ref{p:Kphi}(ii).

\section{Collapsibility implies satisfiability}
\label{s:reverse}

In this section we prove Proposition~\ref{p:Kphi}(iii), \emph{i.e.} we
consider some arbitrary subdivision $K'_\phi$ of $K_\phi$, and prove
that if $K'_\phi$ becomes collapsible after removing some $\tilde \chi
(K_\phi)$ triangles, then $\phi$ is satisfiable. We thus consider a
collapsible subcomplex $\wh K$ of $K'_\phi$ obtained by removing
$\tilde \chi(K_\phi)$ triangles from $K'_\phi$. 

\heading{Notations.}
Throughout this section, we use the following conventions. In
  general, we use hats (for example $\wh K$) to denote subcomplexes of
  $K'_\phi$. Given a subcomplex $M$ of $K_\phi$, we also write $M'$
  for the subcomplex of $K'_\phi$ that subdivides $M$.

\heading{Variable assignment from triangle removal.}
We first read our candidate assignment from the triangle removal
following the same idea as in Section~\ref{s:direct}. This relies on
two observations:
\begin{itemize}
\item The set of triangles removed in $\wh K$ contains exactly one
  triangle from each sphere $S'(u)$. Indeed, since $\wh K$ is
  collapsible and $2$-dimensional, it cannot contain a $2$-dimensional
  sphere. Hence, every sphere $S'(u)$ had at least one of its
  triangles removed. By Proposition~\ref{p:chi}, $\chi(K_\phi) =
  \chi(K'_\phi)$ equals the number of variables of $\phi$, so this
  accounts for all removed triangles.
\item For any variable $u$, any removed triangle in $S'(u)$ is either in
  $D'[u]$ or in $D'[\neg u]$. We give $u$ the \true{}~assignment in the
  former case and the \false{} assignment in the latter case.
\end{itemize}
The remainder of this section is devoted to prove that this assignment
satisfies $\phi$. It will again be convenient to denote by $\ell(u)$
the literal corresponding to this assignment, that is, $\ell(u) = u$
if $u$ was assigned \true{}~and $\ell(u) = \neg u$ otherwise.

\heading{Analyzing the collapse.}
Let us fix some collapse of $\wh K$. We argue that our assignment
satisfies $\phi$ by showing that these collapses must essentially
follow the logical order of the collapse constructed in
Section~\ref{s:direct}. To analyze the dependencies in the collapse,
it is convenient to consider the partial order that it induces on the
simplices of $\wh K$: $\sigma \prec \tau$ if and only if in our
collapse, $\sigma$ is deleted before $\tau$.  We also write $\sigma
\prec \wh M$ for a subcomplex $\wh M$ of $\wh K$ if $\sigma$ was
removed before removing any simplex of $\wh M$.

\bigskip

The key observation is the following dependency:

\begin{lemma}\label{l:egand}
  There exists an edge $\wh e$ of $\gand'$ such that $\wh e \prec
  D'[\neg \ell(u)]$ for every variable $u$.
\end{lemma}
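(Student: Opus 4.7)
The plan is to trace, for each variable $u$, a dependency chain backwards from $D'[\neg\ell(u)]$ through $O'(u)$, $B'(u)$, and $\gand'$, showing that $\gand'$'s first removal strictly precedes any removal in $D'[\neg\ell(u)]$. Since $\gand'$'s first removal is a single event (not depending on $u$), I will take the edge appearing in it as the witness $\wh e$, and it will work uniformly over all variables.

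The backbone observation I will use at every level is: for any subcomplex $\wh M$ of $\wh K$, the first elementary collapse that removes a simplex of $\wh M$ must be an edge--triangle pair $(\sigma,\tau)$ with $\tau\in \wh M$ (and hence $\sigma\subset\tau$ also in $\wh M$). Indeed, were $\tau\notin \wh M$ but $\sigma\in \wh M$, the freeness of $\sigma$ would force every other coface of $\sigma$ in $\wh M$ to have been removed earlier, contradicting ``first''. Combined with Lemma~\ref{l:bhsf}, which pins down the free edges of any $1$-house to those subdividing its free edge, this gives a tight description of $\sigma$ at each first removal.

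Applying this to $\wh M=D'[\neg\ell(u)]$, I note that $\sigma$ cannot be interior to the disk, nor lie on $f[\neg\ell(u)]$, since in either case $\sigma$ has two cofaces inside $D'[\neg\ell(u)]$ that would have to be removed earlier; so $\sigma$ lies on $s(u)$, and freeing it forces every $O'(u)$-triangle on $\sigma$ to be removed first, making $O'(u)$'s first removal strictly earlier. I will then apply the same template to $O'(u)$: the free edge lies on its boundary $s(u)\cup b(u)\cup p(u)$. The $s(u)$-case is excluded because $s(u)\subseteq D'[\neg\ell(u)]$, so such a removal would itself be a $D'[\neg\ell(u)]$-removal; the $p(u)$-case I will rule out by a further iteration---freeing a $p(u)$-edge with an $O'(u)$-triangle as unique coface requires the $X'[\neg\ell(u)]$-triangles on it to be gone, yet by Lemma~\ref{l:bhsf} $X'[\neg\ell(u)]$'s first removal must use an edge subdividing $f[\neg\ell(u)]$, and freeing such an edge requires the two $D'[\neg\ell(u)]$-triangles on it to be gone, a contradiction. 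So the free edge of $O'(u)$'s first removal lies on $b(u)$, pushing $B'(u)$'s first removal strictly earlier.

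One last iteration yields the conclusion: by Lemma~\ref{l:bhsf}, $B'(u)$'s first removal uses an edge subdividing $f(u)$; this edge has two cofaces in $\gand$ (being an interior edge of the lower wall of $\gand$, not on $\fand$), both of which must already be gone, so $\gand'$'s first removal is strictly earlier still. A final application of Lemma~\ref{l:bhsf} identifies $\gand'$'s first removal as an edge $\wh e$ subdividing $\fand$ paired with a triangle of $\gand'$; this $\wh e$ is an edge of $\gand'$, and the chain shows $\wh e\prec D'[\neg\ell(u)]$ for every $u$. The hard part will be the case analysis at the $O'(u)$-level---specifically ruling out the $p(u)$-route, which is precisely the dependency engineered into the construction via the shared edge $f[\neg\ell(u)]$ between $X[\neg\ell(u)]$ and $D[\neg\ell(u)]$; once that is handled, the rest reduces to repeated applications of Lemma~\ref{l:bhsf} and the ``first removal forces $\tau\in\wh M$'' observation.
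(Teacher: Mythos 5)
Your proof is correct and follows essentially the same route as the paper's: both trace the dependency chain $D'[\neg\ell(u)] \to O'(u) \to B'(u) \to \gand'$ via coface-counting at boundary edges, rule out the $p(u)$-escape by noting that $X'[\neg\ell(u)]$ can only start collapsing from $f'[\neg\ell(u)]$, whose edges are blocked by two triangles of $D'[\neg\ell(u)]$, and then apply Lemma~\ref{l:bhsf} twice more to reach $\fand'$. The only cosmetic differences are that the paper treats $D'[\neg\ell(u)]\cup O'(u)$ as a single disk whose first collapse must use a boundary edge (where you argue the two pieces separately), and that it selects $\wh e$ as the earliest of per-variable edges $\wh e_3(u)\subset\fand'$ rather than directly as the free edge of $\gand'$'s first removal.
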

\begin{proof}
  We first argue that for every variable $u$, there exists an edge
  $\wh e_1(u) \in b'(u) \cup p'(u)$ such that $\wh e_1(u) \prec
  D'[\neg \ell(u)]$. To see this, remark that the complex $D'[\neg
    \ell(u)] \cup O'(u)$ is fully contained in $\wh K$ since the
    triangle removed from $S'(u)$ belongs to $D'[\ell(u)]$. It thus
  has to be collapsed. Since this complex is a disk, the first
  elementary collapse in $D'[\neg \ell(u)] \cup O'(u)$ has to involve
  some edge $\wh e_1(u)$ of its boundary. This boundary is $b'(u) \cup
  p'(u)$, so it contains no edge of $D'[\neg \ell(u)]$. It follows
  that $\wh e_1(u) \prec D'[\neg \ell(u)]$.

  We next claim that $\wh e_1(u) \in b'(u)$. Indeed, remark that every
  edge in $p'(u)$ belongs to two triangles of $X'[\neg \ell(u)]$. By
  Lemma~\ref{l:bhsf}, any collapse of $X'[\neg \ell(u)]$ must start by
  an elementary collapse using an edge of $f'[\neg \ell(u)]$ as a free
  face. Any edge of $f'[\neg \ell(u)]$ is, however, contained in two
  triangles of $D'[\neg \ell(u)]$ and thus cannot precede $D'[\neg
    \ell(u)]$ in $\prec$. It follows that $\wh e_1(u) \in b'(u)$.
  
  We can now identify $\wh e$. Observe that $b'(u) \subset B'(u)$. As
  $B'(u)$ is a $1$-house, Lemma~\ref{l:bhsf} ensures that
  the first edge removed from $B'(u)$ must subdivide $f'(u)$. Hence, there is
  an edge $\wh e_2(u) \subset f'(u)$ such that $\wh e_2(u) \prec \wh
  e_1(u)$. Since $f'(u) \subset \gand'$, another $1$-house, the same reasoning yields an edge $\wh e_3(u)$ in $\fand'$
  such that $\wh e_3(u) \prec \wh e_2(u)$. Let $\wh e$ denote the
  first edge removed from $\gand'$ among all edges $\wh e_3(u)$. At
  this point, we have for every variable $u$
$  \wh e \prec \wh e_2(u)\prec \wh e_1(u) \prec D'[\neg \ell(u)]$, as announced.
\end{proof}

\noindent
Let $\wh e$ denote the edge of $\gand'$ provided by
Lemma~\ref{l:egand}, \emph{i.e.} satisfying $\wh e \prec D'[\neg
  \ell(u)]$ for every variable $u$. We can now check that the variable
assignment does satisfy the formula:
\begin{itemize}
\item Consider a clause $c = (\ell_1 \vee \ell_2 \vee \ell_3)$ in
  $\phi$. The complex $\cla'(c)$ is a $3$-house, so
  Lemma~\ref{l:clause} restricts its set of free edges to the
  $f'[\ell_i,c]$. Hence, there is $i \in \{1,2,3\}$ and an edge $\wh
  e_4(c)$ in $f'[\ell_i,c]$ such that $\wh e_4(c) \preceq \cla'(c)$.
  Note that, in particular, $\wh e_4(c) \prec \wh e$ as the edge
  $\fand$ also belongs to $\cla(c)$ and must be freed before
  collapsing $\gand'$ (by Lemma~\ref{l:bhsf}).

\item The subcomplex $f'[\ell_i,c]$ is contained not only in
  $\cla'(c)$, but also in $X[\ell_i]$ which is a $1$-house
  with free edge $f[\ell_i]$. By Lemma~\ref{l:bhsf}, the first
  elementary collapse of $X[\ell_i]$ uses as free face an edge $\wh
  e_5(c)$ that subdivides $f'[\ell_i]$.  In particular, $\wh e_5(c)
  \prec f'[\ell_i,c]$ and $\wh e_5(c) \prec \wh e_4(c)$.
  
\item Let $u$ be the variable of the literal $\ell_i$, that is, $\ell_i =
  u$, or $\ell_i = \neg u$; in particular
  $\ell_i \in \{\ell(u), \neg \ell(u)\}$. From $\wh e_5(c) \prec \wh
  e_4(c) \prec \wh e \prec D'[\neg \ell(u)]$ it comes that $\wh
  e_5(c)$ cannot belong to $D'[\neg \ell(u)]$. Yet, $\wh e_5(c)$
  belongs to $f'[\ell_i]$. It follows that $\ell_i \neq \neg \ell(u)$
  and we must have $\ell_i = \ell(u)$.
   The definition of $\ell(u)$ thus ensures that our assignment
   satisfies the clause $c$.        
\end{itemize}
Since our assignment satisfies every clause, it satisfies $\phi$.

\ifsocg
\else
\section*{Acknowledgment}
We would like to thank Russ Woodroofe and Andr\'{e}s D. Santamar\'{\i}a Galvis for
remarks on a preliminary version on this paper.
\fi

\ifsocg
\bibliographystyle{abbrv}
\else
\bibliographystyle{alpha}
\fi
\bibliography{shard}

\clearpage
\appendix
\ifsocg
\section{Links, subdivisions, and joins.}
\label{a:lsj}
Let $K$ be a (geometric) simplical complex. The \emph{link} of a vertex $v$ in $K$ is
defined as 
$$\lk_K v := \{\sigma \in K\colon v \not\in \sigma \hbox{ and }
\conv(\{v\} \cup \sigma) \in K\}.$$

Now we introduce barycentric subdivision.
Given a nonempty simplex $\sigma \in \R^d$, let
$b_\sigma$ denote its barycenter (we have $v = b_{v}$ for a vertex
$v$). The \emph{barycentric subdivsion} of a complex $K$ in $\R^d$ is
the complex $\sd K$ with vertex set $\{b_{\sigma} \colon \sigma \in K
\setminus \{\emptyset\}\}$ and whose simplices are of the form $\conv
(b_{\sigma_1}, \cdots, b_{\sigma_k})$ where $\sigma_1 \subsetneq
\sigma_2 \subsetneq \ldots \subsetneq \sigma_k \in K \setminus
\{\emptyset\}$. The \emph{$\ell$th barycentric subdivision} of $K$,
denoted $\sd^\ell K$, is the complex obtained by taking successively
the barycentric subdivision $\ell$ times.

Although we provide the other definitions on the level of geometric
  complexes, we define the join on the level of abstract complexes, which is
much easier.
The \emph{join} $K *L$ of two (abstract) simplicial complexes with disjoint sets
of vertices is the complex $K*L = \{\sigma \cup \tau \colon \sigma \in
K, \tau \in L\}$. In particular, note that $\{\emptyset\} * K =
K$. For $\ell\geq 0$, let $\Delta_\ell$ denote the \emph{$\ell$-dimensional simplex};%
\footnote{Considered as a simplicial complex consisting of all faces of the simplex, including the simplex itself; 
as an abstract simplicial complex, $\Delta_\ell$ consists of all the subsets of an $(\ell+1)$-element set.} 
we extend this to the case $\ell=-1$, by using the convention that $\Delta_{-1}=\{\emptyset\}$ 
is the abstract simplicial complex whose unique face is the empty face $\emptyset$. Note that, for a pure simplicial complex $K$, an ordering $\sigma_1, \sigma_2, \ldots, \sigma_n$ of the
facets of $K$ is shelling if and only if the ordering $\Delta_\ell \cup \sigma_1, \Delta_\ell \cup \sigma_2, \ldots,
\Delta_\ell \cup \sigma_n$ is a shelling of $\Delta_\ell * K$.

\section{Connectedness of links}
\label{a:links}

In this section, we prove Proposition~\ref{p:Kphi}(i), \emph{i.e.}
that the link of every vertex in the complex $K_\phi$ of
Section~\ref{s:construction} is connected. By construction, the
complex $K_\phi$ is covered by the following subcomplexes: $\gand$,
$S(u) \cup O(u)$, $B(u)$, $X[\ell]$ and $\cla(c)$, where $u$ ranges
over all variables, $\ell$ ranges over all literals and $c$ ranges
over all clauses. We first argue that in each subcomplex, the link of
every vertex is connected. We then ensure that these subcomplexes are
glued into $K_\phi$ in a way that preserves the connectedness of the links.

\bigskip
\noindent
\begin{minipage}{5cm}
  \begin{center} \includegraphics[keepaspectratio, page=9]{revised-figures_socg} \end{center}
\end{minipage}
\hfill
\begin{minipage}{8.5cm}
\heading{Links within the subcomplexes.}
The proof is straightforward but rather pedestrian. Let us start
  with the $1$-house $B$. Most points have a link homeomorphic to
  $K_3$, so connectedness is immediate. For
  the remaining points, depicted on the left, the links are
  homeomorphic to one of $K_2$, $K_4$, $K_{2,3}$ and $K_2 \cdot K_3$,
  the graph obtained by gluing $K_2$ and $K_3$ at a vertex.  This
settles the cases of the subcomplexes $\gand$, $B(u)$ and $X[\ell]$.
\end{minipage}

\bigskip
\noindent
\begin{minipage}{8cm}
  A similar analysis for the subcomplex $S(u) \cup O(u)$ reveals that
  the links are homeomorphic to one of $K_2$, $K_3$, $K_{2,3}$ and the
  `bull graph', that is the graph on 5 vertices formed from the
  triangle and two edges attached to it at different vertices. 
\end{minipage}
\hfill
\begin{minipage}{6cm}
  \begin{center} \includegraphics[keepaspectratio, page=10]{revised-figures_socg} \end{center}
\end{minipage}

\bigskip

Last, a careful inspection of the construction in~\cite{tancer16} of
the $3$-house yields that the link of every vertex is
homeomorphic to one of $K_2$, $K_3$, $K_{2,3}$, $K_2 \cdot K_3$,
$K_{2,4}$ or the graph obtained from a triangle by gluing three other
triangles to it, one along each edge. This covers the subcomplexes
$\cla(c)$.

\heading{Links after gluing.}
We now argue that if $v$ is a vertex shared by two of our subcomplexes
$C$ and $C'$, then there is an edge incident to $v$ and common to $C$
and $C'$. This ensures that the links of $v$ in $C$ and $C'$ share at
least a vertex, so the connectedness of $\lk_{C \cup C'} v$ follows
from that of $\lk_{C} v$ and $\lk_{C'} v$. If $v$ is shared by
subcomplexes $C_1, C_2, \ldots C_k$, we can apply this idea
iteratively by finding a sequence of edges $e_1, e_2, \ldots, e_{k-1}$
where $e_i$ is common to $C_i$ and \emph{at least one of} $C_1, C_2,
\ldots, C_{i-1}$.

\bigskip

Let us first examine $v(u)$ for some variable $u$. This vertex is
common to $S(u) \cup O(u)$, $B(u)$, $X[u]$ and $X[\neg u]$. The
connectedness of $\lk_{K_\phi} v(u)$ follows from the existence of the
following edges incident to $v(u)$:

\begin{itemize}
\item $b(u)$, common to $S(u) \cup O(u)$ and $B(u)$,
\item $p(u)$, common to $S(u) \cup O(u)$, $X[u]$ and $X[\neg u]$.
\end{itemize}


Let us next examine $\vand$. This vertex is common to all our
subcomplexes, that is to $\gand$, $B(u)$, $S(u) \cup O(u)$, $X[\ell]$
and $\cla(c)$ for all variable $u$, literal $\ell$, and clause
$c$. The connectedness of $\lk_{K_\phi} \vand$ follows from the
existence of the following edges incident to $\vand$:

\begin{itemize}
\item the edges $f(u)$, common to $\gand$ and $B(u)$,
\item the edges $b(u)$, common to $B(u)$ and $S(u) \cup O(u)$,
\item the edges $p(u)$, common to $S(u) \cup O(u)$, $X[u]$ and $X[\neg u]$
\item $\fand$, common to $\gand$ and every $\cla(c)$.
\end{itemize}

The remaining vertices shared by two or more of our subcomplexes are
defined as part of an edge common to these subcomplexes. The
connectedness of the links in $K_\phi$ of these vertices is thus
immediate. This completes the proof of Proposition~\ref{p:Kphi}(i).

\section{Reduced Euler characteristic of $K_\phi$}
\label{a:euler}

Here we prove Proposition~\ref{p:chi}.

By inclusion-exclusion, for any simplicial complexes
$K_1$ and $K_2$ we have:
\begin{equation}
  \label{e:chi}
  \tilde \chi (K_1 \cup K_2) = \tilde \chi
  (K_1) + \tilde \chi(K_2) - \tilde \chi (K_1 \cap K_2).
\end{equation}
In particular, if both $K_2$ and $K_1 \cap K_2$ are contractible, then
$\tilde \chi(K_1 \cup K_2) = \tilde \chi (K_1)$.

\begin{proof}
First, let us observe that the subcomplexes $\gand$, $B[u]$, $X[\ell]$ and
  $\cla(c)$ (for all variables $u$, literals $\ell$ and clauses $c$) are
  contractible. Indeed each of them is either a $1$-house or $3$-house which
  are collapsible by Lemmas~\ref{l:bhsf} and~\ref{l:clause}, thereby
  contractible. In addition, each of the aforementioned subcomplexes is attached to the rest of the complex in contractible subcomplexes (trees).

  Therefore, by the claim following Equation~\eqref{e:chi}, 
  we may replace each of these gadgets (in
  any order) with the shared trees without affecting the reduced Euler
  characteristic. That is, $\tilde \chi (K_\phi) = \tilde \chi (K')$
  where
$$
  K' := \fand \cup \bigcup_{u} (f(u) \cup S(u) \cup O(u)) \cup \bigcup_{(\ell, c):  \ell \in c} (f[\ell,c] \cup p[\ell,c])
$$
  where the first (big) union is over all variables $u$, and the second is over all pairs $(\ell, c)$ where a literal
  $\ell$ belongs to a clause $c$.

\smallskip\noindent
\begin{minipage}{8.5cm}

\qquad By collapsing the pendent edges and paths, we get $\tilde \chi (K') = \tilde
  \chi(K'')$ where
$$
K'' := \bigcup_{u} \left(S(u) \cup O(u)\right).
$$

Finally, for every variable $u$ we have
  $\tilde \chi (O(u) \cup S(u)) = 1$ as $O(u) \cup S(u)$ is homotopy
  equivalent to the $2$-sphere. For any distinct variables $u,u'$, the
  complexes $O(u) \cup S(u)$ and $O(u') \cup S(u')$ share only a
  vertex, namely~$\vand$. Equation~\eqref{e:chi} then yields that
  $\tilde \chi(K_\phi) = \tilde \chi(K'')$ is the number of variables.
\end{minipage}
\hfill
\begin{minipage}{5.5cm}
  \begin{center} \includegraphics[page=11]{revised-figures_socg} \end{center}
\end{minipage}

\end{proof}

\begin{remark}
  It is possible, with slightly more effort, to show that $K_\phi$ is
  homotopy equivalent to 
  $K''$, hence to a 
  wedge of spheres, one
  for each variable.
  This also implies Proposition~\ref{p:chi} but for our
  purpose, computing the reduced Euler characteristic suffices.
\end{remark}

\fi

\section{Collapsing a $1$-house}
\label{s:collapse_to_tree}

In this section we prove the second statement of
Lemma~\ref{l:bhsf} (recalled below). We use an auxiliary lemma:

\begin{lemma}\label{l:KtoT}
  A triangulation of a topological disk collapses to any tree
  contained in its $1$-skeleton.
\end{lemma}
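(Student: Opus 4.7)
The plan is to prove the lemma by induction on the number $n$ of $2$-simplices of $D$. The key topological input is that the boundary $\partial D$ of a triangulated $2$-disk is a simple cycle in its $1$-skeleton; since $T$ is a tree (hence acyclic), at least one edge $e \subset \partial D$ must satisfy $e \notin T$. Being on the boundary, this $e$ is contained in exactly one $2$-simplex $\tau$, so $(e,\tau)$ is a free pair with both simplices avoiding $T$. This observation fuels every step of the induction.

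For the base case $n=1$, the complex $D$ is a single triangle and $T$ is a proper subtree of its boundary $3$-cycle; collapse the free pair $(e,\tau)$ for any edge $e$ not in $T$, then finish by trimming the remaining pendant edges not in $T$. For the inductive step with $n \ge 2$, I collapse $(e,\tau)$ and examine the resulting complex $D_1 = D \setminus \{e,\tau\}$. If $\tau$ has only one edge on $\partial D$, then $D_1$ is again a triangulated disk with $n-1$ triangles containing $T$, and the inductive hypothesis applies directly.

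The main obstacle is the case where $\tau$ has two edges on $\partial D$, say $e$ and $e'$ (it cannot have three, as that forces $D=\tau$ and contradicts $n \ge 2$). In this situation the third edge $e''$ of $\tau$ is interior, so it was contained in exactly two triangles of $D$ and becomes a free face of $D_1$, while $e'$ loses its unique coface and is left as a dangling ``whisker.'' The plan to handle this is to cascade: continue collapsing inward through $(e'',\tau^\ast)$ whenever $e'' \notin T$, and independently trim the whisker $e'$ from a free endpoint whenever one is available. To make the bookkeeping clean, I would strengthen the induction to act on the lexicographic measure $\bigl(n,\;\#\{\text{edges of } D \setminus T\}\bigr)$, so that every elementary collapse strictly decreases the measure and the invariant ``obtained from a triangulated disk by collapses avoiding $T$'' is preserved. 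The remaining verification — that some admissible free pair avoiding $T$ always exists until the current complex coincides with $T$ — follows from the same cycle-versus-tree dichotomy applied to the boundary cycle of the $2$-dimensional part of the current complex (together with the observation that any purely $1$-dimensional residue is a forest containing $T$, and pendant edges of a forest not in $T$ can always be trimmed).
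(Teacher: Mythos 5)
Your overall strategy (greedily collapse free edges avoiding $T$, using the fact that a tree contains no cycle to produce free faces) is the right one, but the inductive set-up contains a genuine gap: the claim that if $\tau$ has only one edge $e$ on $\partial D$ then $D_1=D\setminus\{e,\tau\}$ is again a triangulated disk is false. The vertex of $\tau$ opposite to $e$ may also lie on $\partial D$, in which case removing $e$ and $\tau$ pinches the disk. Concretely, take the fan triangulation of a hexagon $123456$ with triangles $123,134,145,156$: the triangle $134$ has exactly one boundary edge, namely $34$, but after collapsing $(34,134)$ the two-dimensional part is the wedge of the disks $123$ and $145\cup156$ at the vertex $1$, not a disk. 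So the inductive hypothesis is not preserved, and your fallback --- strengthening the invariant to ``obtained from a disk by collapses avoiding $T$'' with a lexicographic measure --- merely relocates the difficulty into the ``remaining verification,'' which you justify by appealing to ``the boundary cycle of the $2$-dimensional part of the current complex.'' That justification does not apply: as the example shows, the $2$-dimensional residue need not be a disk and its set of edges lying in exactly one triangle need not be a simple cycle, so the cycle-versus-tree dichotomy cannot be invoked in the form you state it.

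The missing ingredient is a homological (rather than geometric) reason why a free edge outside $T$ persists: if $c$ is the mod-$2$ sum of the triangles of an intermediate complex and $c\neq 0$, then $\partial c\neq 0$ because the disk carries no nonzero $2$-cycles; $\partial c$ is an even-degree subgraph of the $1$-skeleton, hence contains a graph cycle, hence is not contained in $T$; and every edge of $\partial c$ lies in exactly one remaining triangle, so one of them is a free face outside $T$. This is exactly how the paper argues (applied once, to the terminal complex of the greedy process, to show it has no triangles), and it sidesteps any structure theory of the intermediate complexes. You would also need to justify, rather than assert, that the purely $1$-dimensional residue is a tree containing $T$: it is contractible because elementary collapses preserve the homotopy type of the disk, and a contractible $1$-complex is a tree. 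With these two points supplied your argument closes, but as written the proof does not go through.
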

\begin{proof}
  Let $K$ be a triangulation of a topological disk (that is, the polyhedron of
  $K$ is homeomorphic to a $2$-dimensional disk) and $T$ a tree contained in
  the $1$-skeleton of $K$. While there is an edge of $K$ that is free
  and not in $T$, we collapse such an edge. Let $K'$ denote the
  resulting complex. 

  Let us first argue that $K'$ contains no triangle. Let $c$ denote
  the (possibly empty) $\Z_2$-chain obtained by summing the triangles
  of $K'$. The $1$-chain $\partial c$ is a $1$-cycle by definition and
  it is supported on $T$. Indeed, every edge in $K'$ is contained in
  zero, one or two triangles and any edge contained in exactly one
  triangle and not in $T$ could be used as a free face to further
  collapse $K'$. Since a tree does not contain any nontrivial
  $1$-cycle, it follows that $\partial c$ is empty. In $K$, any
  nonempty $2$-chain has a non-empty boundary. It follows that $c$ is
  empty and $K'$ is indeed $1$-dimensional.

  Since $K'$ is a collapse of $K$, it must be contractible. Hence,
  $K'$ is a tree and, by construction, it contains $T$. The statement
  follows since a tree always collapses to any of its sub-trees.
\end{proof}

\noindent
We can now prove the announced statement: that a $1$-house
with free edge $f$ and lower wall $L$ collapses to any subtree $t$ of the
$1$-skeleton of $B$ that is contained in $L$ and shares with the
boundary of~$L$ a single endpoint of $f$.

\begin{proof}[Proof of Lemma~\ref{l:bhsf}]
  We apply Lemma~\ref{l:KtoT} repeatedly. First, we collapse the lower
  wall $L$ to the tree formed by $t$ and the subcomplex of $B$
  triangulating $(\partial L) \setminus f$. Next, we collapse the
  lowest floor, except for the edges that belong to walls that are
  still present. We proceed to collapse every wall that used to be
  attached to the lowest floor. The resulting complex is already a
  disk with $t$ attached to it. We collapse the disk to the attachment
  point and are done.
\end{proof}

\end{document}